\newtheorem{theorem}{Theorem}[section]
\newtheorem{thm}[theorem]{Theorem}
\newtheorem{cor}[theorem]{Corollary}
\newtheorem{prop}[theorem]{Proposition}
\newtheorem{lemma}[theorem]{Lemma}
\theoremstyle{definition}
\newtheorem{ex}[theorem]{Example}
\numberwithin{equation}{theorem}
\DeclareMathAlphabet{\mathpzc}{OT1}{pzc}{m}{it}
\DeclareMathOperator{\kar}{char}
\begin{document}

\newcommand{\str}{{\mathrm{sT}}}
\newcommand{\Trd}{{\mathrm{Trd}}}
\newcommand{\rad}{{\mathrm{rad}}}
\newcommand{\id}{{\mathrm{id}}}
\newcommand{\Ad}{{\mathrm{Ad}}}
\newcommand{\Ker}{{\mathrm{Ker}}}
\newcommand{\wedges}[1]{\d b_1\wedge\ldots\wedge \d b_{#1}}
\newcommand{\rank}{{\mathrm{rank}}}
\renewcommand{\dim}{{\mathrm{dim}}}
\newcommand{\coker}{{\mathrm{Coker}}}
\newcommand{\can}{\overline{\rule{2.5mm}{0mm}\rule{0mm}{4pt}}}
\newcommand{\End}{{\mathrm{End}}}
\newcommand{\Sand}{{\mathrm{Sand}}}
\newcommand{\Hom}{{\mathrm{Hom}}}
\newcommand{\Nrd}{{\mathrm{Nrd}}}
\newcommand{\Srd}{{\mathrm{Srd}}}
\newcommand{\ad}{{\mathrm{ad}}}
\newcommand{\rk}{{\mathrm{rk}}}
\newcommand{\Mon}{{\mathrm{Mon}}}
\newcommand{\disc}{{\mathrm{disc}}}
\newcommand{\Sym}{{\mathrm{Sym}}}
\newcommand{\Skew}{{\mathrm{Skew}}}
\newcommand{\Nrp}{{\mathrm{Nrp}}}
\newcommand{\Trp}{{\mathrm{Trp}}}
\newcommand{\Alt}{{\mathrm{Alt}}}
\newcommand{\Symd}{{\mathrm{Symd}}}
\renewcommand{\dir}{{\mathrm{dir}}}
\renewcommand{\geq}{\geqslant}
\renewcommand{\leq}{\leqslant}
\newcommand{\an}{{\mathrm{an}}}
\newcommand{\alt}{{\mathrm{alt}}}
\renewcommand{\Im}{{\mathrm{Im}}}
\newcommand{\Int}{{\mathrm{Int}}}
\renewcommand{\d}{{\mathrm{d}}}
\newcommand{\qf}[1]{\mbox{$\langle #1\rangle $}}
\newcommand{\pff}[1]{\mbox{$\langle\!\langle #1
\rangle\!\rangle $}}
\newcommand{\pfr}[1]{\mbox{$\langle\!\langle #1 ]]$}}
\newcommand{\HH}{{\mathbb H}}
\newcommand{\s}{{\sigma}}
\newcommand{\lra}{{\longrightarrow}}
\newcommand{\ZZ}{{\mathbb Z}}
\newcommand{\NN}{{\mathbb N}}
\newcommand{\FF}{{\mathbb F}}
\newcommand{\PERP}{\mbox{\raisebox{-.5ex}{{\Huge $\perp$}}}}
\newcommand{\Perp}{\mbox{\raisebox{-.2ex}{{\Large $\perp$}}}}
\newcommand{\M}[1]{\mathbb{M}( #1)}
\newcommand{\qi}{\mid}
\newcommand{\qil}{\,\cdot\!\!\mid}
\newcommand{\qir}{\mid\!\!\cdot\,}
\newcommand{\qilr}{\,\cdot\!\!\mid\!\!\cdot\,}
\newcommand{\qp}[2]{\mbox{$[ {#1}\,|\!|\,{#2} )$}}

\newcommand{\vf}{\varphi}
\newcommand{\mg}[1]{#1^{\times}}

\title{Totally decomposable  quadratic pairs}
\author{Karim Johannes Becher}
\email{KarimJohannes.Becher@uantwerpen.be, Andrew.Dolphin@uantwerpen.be}
\address{Universiteit Antwerpen, Departement Wiskunde-Informatica, Middelheim\-laan~1, 2020 Antwerpen, Belgium.}
\author{Andrew Dolphin}
%\address{Departement Wiskunde-Informatica, Universiteit Antwerpen, Belgium}
\thanks{This work was supported by the {Deutsche Forschungsgemeinschaft} (project \emph{The Pfister Factor Conjecture in Characteristic Two}, BE 2614/4), the FWO Odysseus programme (project \emph{Explicit Methods in Quadratic Form Theory}), the {Zukunftskolleg, Universit\"at Konstanz} and the Fonds Sp\'eciaux de Recherche (postdoctoral funding) through the Universit\'e Catholique de Louvain.}
%\thanks{2010 \emph{Mathematics Subject Classification}. Primary 11E39; Secondary 11E81; 12F05; 12F10. \emph{Key words and phrases.} Central simple algebras; involutions; quadratic pairs; characteristic two; unitary involutions; Pfister Factor Conjecture. }

\begin{abstract} In this paper we show that a  split central simple algebra with quadratic pair   which decomposes into a tensor product of quaternion algebras with involution and a quaternion algebra with quadratic pair is adjoint to a quadratic Pfister form. 
This result is new in characteristic two, otherwise it is equivalent to the Pfister Factor Conjecture proven in  \cite{Becher:qfconj}.

\medskip\noindent
\emph{Keywords:} Central simple algebras, involutions, quadratic pairs, characteristic two, quadratic forms

\medskip\noindent
\emph{Mathematics Subject Classification (MSC 2010):} 11E39, 11E81, 16W10, 16K20. %12F05, 12F10. 
\end{abstract}

\maketitle

\section{Introduction}

To every symmetric or alternating bilinear form, and hence to every quadratic form when the base field is not of characteristic two, we can associate a central simple algebra with involution. In this way,  the theory of quadratic forms is embedded into the larger theory of algebras with involution, and through the use of this correspondence,  quadratic form theory has provided  inspiration for the study of algebras with involution (see \cite{Knus:1998}).
Pfister forms are a central concept in the modern algebraic theory of quadratic forms. It is therefore natural to look for class of central simple algebras with involution  which extends the notion of a Pfister form. This question was first raised in  \cite{parimala:pfisterinv}. Involutions adjoint to Pfister forms are tensor products of quaternion algebras with involution.  Thus,  tensor products of quaternion algebras with involution are a natural candidate. 

%In  \cite{Becher:qfconj} it is shown that, over fields of characteristic different from two, 
 %a split algebra with totally decomposable orthogonal involution is adjoint to a Pfister form.  This result is known as the Pfister Factor Conjecture. 
 
 In characteristic two, quadratic forms and symmetric bilinear forms are not equivalent objects. The relation between bilinear Pfister forms and totally decomposable involutions in characteristic two was studied in \cite{dolphin:orthpfist}. 
In order to have an  object defined on a central simple algebra that corresponds to a quadratic form after splitting, the notion of a quadratic pair was introduced in  \cite[\S5]{Knus:1998}. In particular, one may use quadratic pairs to give an intrinsic definition of  twisted orthogonal groups in a manner that includes fields of characteristic two (see  \cite[\S23.B]{Knus:1998}).

Algebras with quadratic pair associated to  quadratic Pfister forms are  tensor products of quaternion algebras with involution and a quaternion algebra with quadratic pair. 
One may ask whether all such totally decomposable quadratic pairs on a split central simple algebra  
 are adjoint to a quadratic Pfister form.
%  Our main result shows that over fields of characteristic two, the answer to this question is positive. 
In characteristic different from two, where quadratic pairs are equivalent to orthogonal involutions, this is known to hold by the main result of  \cite{Becher:qfconj},  which says that in this case a
totally decomposable orthogonal involution on a split algebra
 is adjoint to a Pfister form. %  (see  \cite[Chapter 9]{Shapiro:2000} for background on this result). 
In this article we prove the corresponding result for quadratic pairs over fields of characteristic two.

   Our approach is particular to fields of characteristic two.  It allows us to capture more information on the resulting quadratic Pfister form (see \cref{cor:main}), in a way that  is not possible in general over fields of characteristic different from two (see \cref{ex:countex}). 
 This approach uses the unusual  properties of totally decomposable involutions in characteristic two found in \cite{dolphin:orthpfist}. In particular, we use that the isotropy behaviour of a totally decomposable orthogonal involution can be captured in the isotropy behaviour of an associated  bilinear Pfister form  (see \cref{thm:pfisterinvar}).

%Hence, while our main result holds for arbitrary characteristic, our approach to prove it in the remaining case of characteristic two does not.
The method used in \cite{Becher:qfconj} in characteristic different from two is based on a ramification exact sequence for Witt groups of quadratic forms over function fields of conics and the excellence property of these function fields.
The latter result is known to extend   to the case of arbitrary characteristic, but the former is  not yet available in characteristic two in a suitable form.
We intend to give a characteristic free proof in a future article.
Our solution to the missing case of characteristic two involves several  basic properties of quadratic pairs, %(in particular those in \S\ref{tenprodqps}),
 which we did not find explicitly in the literature. Following \cite[\S5]{Knus:1998},  we  present such statements without assumptions on the characteristic, for  ease of future reference.

\section{Quadratic forms over fields}

In this section we recall the  terminology and results we use from quadratic form theory. 
We refer to \cite[Chapters 1 and 2]{Elman:2008} as a general reference on symmetric bilinear and quadratic forms and for any basic notation and concepts not defined here. %Recall that over fields of characteristic different from $2$ the notion of a symmetric bilinear form and the notion of a quadratic form are equivalent (\cite[\S 7 p.41]{Elman:2008}). This is not the case over fields of characteristic $2$. 
For two objects $\alpha$ and $\beta$ in a certain category, we write $\alpha\simeq\beta$ to indicate that they are {isomorphic}, i.e.~that there exists an isomorphism between them.
This applies in particular to algebras with involution or with quadratic pair, but also to quadratic  and bilinear  forms, where the corresponding isomorphisms are  called {isometries}.
Throughout, let $F$ be a field. Let $\kar(F)$ denote the characteristic of $F$ and let $F^\times$ denote the multiplicative group of $F$. 

A  \emph{bilinear form over $F$} is a pair $(V,b)$ where $V$ is a finite-dimensional $F$-vector space and $b$ is a  $F$-bilinear map  $b:V\times V\rightarrow F$.  
 The \emph{radical of $(V,b)$} is the set $\rad(V,b)=\{x\in V\mid b(x,y)=0 \textrm{ for all } y\in  V\}$.
 We say that $(V,b)$ is
\emph{degenerate} if $\rad(V,b)\neq \{0\}$, and  \emph{nondegenerate} otherwise. 
Let $\varphi=(V,b)$ be a bilinear form over $F$. 
%We refer to $\dim_F(V)$ as the \emph{dimension of $\varphi$} and also denote it by $\dim(\varphi)$.
We say that $\vf$ is \emph{symmetric} if $b(x,y)=b(y,x)$ for all $x,y\in V$. 
 We call $\varphi$ \emph{alternating} if  $b(x,x)=0$ for all $x\in V$. 
 %If $\varphi$ is alternating  then we have that $b(x,y)=-b(y,x)$ for all $x,y\in V$, that is, $\varphi$ is \emph{skew-symmetric}. In particular every alternating  bilinear form over a field of characteristic $2$ is symmetric.
%We say a bilinear form $\varphi=(V,b)$ \emph{represents an element $a\in F$ } if there exists and $x\in V\backslash\{0\}$ such that $b(x,x)=a$, and we denote the set of elements in $F^\times$ represented by $\varphi$ by $D(\varphi)$.
The  form $\varphi$ is said to be \emph{isotropic} if there exists an $x\in V\backslash\{0\}$ such that $b(x,x)=0$, and \emph{anisotropic} otherwise. 
We call a subspace $W\subseteq V$ \emph{totally isotropic} (with respect to $b$) if $b|_{W\times W}=0$. 
If  $\varphi$ is nondegenerate and there exists a totally isotropic subspace $W\subseteq V$ such that $\dim_F(W)=\frac{1}{2}\dim_F(V)$, then we call $\vf$ \emph{metabolic}. %Note that an alternating form is always metabolic (see \cite[Chapters 1 and 2]{Elman:2008}).  
%Let $K/F$ be a field extension. Then we write $(V,b)_K=(V\otimes_F K, b_K) $ where $b_K$ is  given by $b_K( v\otimes k, v'\otimes k' )= kk'b(v,v')$ for all $v,v'\in V$ and $k,k'\in K$.

%Let  $\varphi_1=(V,b_1)$ and $\varphi_2=(W,b_2)$ be symmetric or alternating bilinear forms over $F$.
%The \emph{orthogonal sum of $\varphi_1$ and $\varphi_2$}, denoted $\varphi_1\perp \varphi_2$, is defined to be the pair $(V\times W,b'')$ with the $F$-bilinear  map $b'':(V\times W)\times (V\times W)\rightarrow F$ given by  $b'((v_1, w_1),(v_2, w_2))= b_1(v_1,v_2) + b_2(w_1,w_2)$ for all $v_1, v_2\in V$ and $w_1,w_2\in W$.
%The \emph{tensor product of  $\varphi_1$ and $\varphi_2$}, denoted $\vf_1\otimes \vf_2$, is defined to be the pair $(V\otimes W,b')$ with the $F$-bilinear  map $b':(V\otimes W)\times (V\otimes W)\rightarrow F$ determined by the rule that  $b'(v_1\otimes w_1,v_2\otimes w_2)= b_1(v_1,v_2) \cdot b_2(w_1,w_2)$ for all $v_1, v_2\in V$ and $w_1,w_2\in W$.
%By \cite[(1.17)]{Elman:2008}, for any nondegenerate non-alternating symmetric  bilinear form $\varphi$ over $F$, there exists $a_1,\ldots,a_n\in F^\times$ such that $\varphi\simeq \qf{a_1,\ldots,a_n}_b$.

%Let $K/F$ be a field extension. Then we write $(V,b)_K=(V\otimes_F K, b_K) $ where $b_K$ is  given by $b_K( v\otimes k , w\otimes k')= kk'b(v,w)$ for all $v,w\in V$ and $k,k'\in K$.  %This induces a natural inclusion $W_q(F)\rightarrow W_q(K)$.

For $a_1,\ldots,a_n\in F^\times$ the symmetric $F$-bilinear map $b:F^n\times F^n\rightarrow F$ given by $(x,y)\mapsto \sum_{i=1}^n a_ix_iy_i$ yields a
symmetric bilinear form $(F^n,b)$ over $F$,  which we denote by $\qf{a_1,\ldots,a_n}$.
For a positive integer $m$, by an \emph{$m$-fold bilinear Pfister form over $F$} we mean a nondegenerate symmetric bilinear form over $F$ that is isometric to
 $\qf{1,a_1}\otimes\ldots\otimes\qf{1,a_m}$  for some 
 $a_1,\ldots,a_m\in F^\times$.  We call $\qf{1}$  the \emph{$0$-fold bilinear Pfister form}.
%For $a_1,\ldots,a_m\in F^\times$, we denote the $m$-fold Pfister form by $\pff{a_1,\ldots, a_m}$. 
By  \cite[(6.3)]{Elman:2008},  a bilinear Pfister form  is either anisotropic or metabolic.

By a \textit{quadratic form over $F$} we  mean a pair $(V,q)$ of a finite-dimensional $F$-vector space $V$ and a map  $q:V\rightarrow F$ such that, firstly,  $q(\lambda x)=\lambda^2q(x)$ holds for all $x\in V$ and $\lambda\in F$, and secondly,  the map   $b_q:V\times V\rightarrow F\,,\,(x,y)\longmapsto q(x+y)-q(x)-q(y)$ is  $F$-bilinear.
Then $(V,b_q)$ is a symmetric bilinear form over $F$, called the \emph{polar form of $(V,q)$}. If $b_q$ is nondegenerate, we say that $(V,q)$ is \emph{nonsingular}, otherwise we say that $(V,q)$ is \emph{singular}. If $b_q$ is the zero map, then we say $(V,q)$ is \emph{totally singular}.
By the \emph{quadratic radical of $(V,q)$} we mean the set 
$\rad(V,q)= \{ x\in \rad(V,b_q)\mid q(x)= 0\}$.
We say that $(V,q)$ is \emph{regular} if $\rad(V,q)=\{0\}$. 
For a symmetric bilinear form $(V,b)$ over $F$, the map $q:V\rightarrow F$ given by $q_b(x)=b(x,x)$ makes $(V,q_b)$ a quadratic form over $F$.
We call $(V,q_b)$ the \emph{quadratic form associated to $(V,b)$}.
 If $\kar(F)=2$ then this quadratic form is totally singular. 
 %If $\kar(F)\neq2$, then the quadratic form associated to the polar form of a quadratic form $(V,q)$ is $(V,2q)$, giving a one-to-one correspondence between quadratic forms and symmetric bilinear forms.

Let $\rho=(V,q)$ and $\rho'=(V',q')$ be quadratic forms over $F$. By an \emph{isometry of quadratic  forms $\rho\rightarrow\rho'$} we mean an isomorphism of $F$-vector spaces $f:V\rightarrow V'$ such that $q(x)=q'(f(x))$ for all $x\in V$.
%We call $\dim_F(V)$ the \emph{dimension of $\rho$} and denote it by $\dim(\rho)$.
%	For $c\in {F}^\times$ let $c\rho$ denote the quadratic form $(V,cq)$, where $(cq)(x)=c(q(x))$ for $x\in V$. We say that $\rho$ \emph{represents an element $a\in F$} if $q(x)=a$ for some $x\in V\backslash\{0\}$. 
%For a symmetric bilinear form $\varphi$ and some $c\in {F}^\times$  we say that \emph{$\varphi$ represents $c$} if the quadratic form associated to $\varphi$ represents $c$, and we denote the set of non-zero elements represented by $\varphi$ by $D(\varphi)$. 
 We say $\rho$ is \emph{isotropic} if
 $q(x)=0$ for some $x\in V\backslash\{0\}$, and \emph{anisotropic} otherwise. 
 By \emph{a totally isotropic subspace  of $\rho$} we mean an $F$-subspace $W$ of $V$ such that $q|_{W}=0$. 
 We call the maximum of the  dimensions of all totally isotropic subspaces of $\rho$ the \emph{Witt index of $\rho$},  denoted $i_W(\rho)$.
 Assume $\rho$ is nonsingular. Then $i_W(\rho)\leqslant\frac{1}{2}\dim(\rho)$ (see \cite[(7.28)]{Elman:2008}) and if $i_W(\rho)= \frac{1}{2}\dim (\rho)$ 
we say that $\rho$ is \emph{hyperbolic}.  We denote the anisotropic part of $\rho$ by $\rho_\an$ (see  \cite[(8.5)]{Elman:2008}).
 %The maximal dimension of a totally isotropic subspace of $\varphi$ is called the \emph{Witt index of $\rho$} and denoted by $\iota(\rho)$. 
 % We say that $\rho$ \emph{represents an element $a\in {F}^\times$} if there exists an $x\in V\backslash\{0\}$ such that $q(x)=a$. 
%	For $c\in {F}^\times$ let $c\rho$ denote the quadratic form $(V,cq)$, where $(cq)(x)=c(q(x))$ for $x\in V$.
%For any nonsingular quadratic form   $\rho$  over $F$, there exist nonsingular quadratic forms $\rho'$  and $\rho''$ over $F$ with $\rho'$ anisotropic and $\rho''$ hyperbolic such that $\rho\simeq\rho'\perp\rho''$.  The form  $\rho'$  is uniquely determined up to isometry and we call it \emph{the anisotropic part of $\rho$} (see  \cite[(8.5)]{Elman:2008}), and we denote it by $\rho_\an$.

%Let $\rho_1$ and $\rho_2$ be quadratic forms over $F$.
%By an \emph{isometry of quadratic forms from $\rho_1$ to $\rho_2$} we mean an isomorphism of $F$-vector spaces $f:V\longrightarrow W$ such that $q=q'\circ f $. If such an isometry exists, we say $\rho_1$ and $\rho_2$ are \emph{isometric}.
We say that the quadratic forms $\rho_1$ and $\rho_2$ over $F$ are \emph{similar} if there exists an element $c\in {F}^\times$ such that $\rho_1\simeq c\rho_2$.
%The \emph{orthogonal sum of the quadratic forms $\rho_1$ and $\rho_2$}, denoted $\rho_1\perp\rho_2$, is defined to be pair $(V\times W,q'')$ with $q'':(V\times W)\rightarrow F$ given by $q''((v, w))= q'(v) +q(w)$ for all $v\in V$ and $w\in W$.
%We say that $\rho_2$ is a \emph{subform of $\rho_1$} if there exists a quadratic form $\rho_3$ over $F$ such that $\rho_1\simeq \rho_2\perp\rho_3$. 
Recall the concept of a tensor product of a symmetric or alternating bilinear form and a quadratic form (see \cite[p.51]{Elman:2008}).
%Let $\varphi=(V,b)$ be a symmetric bilinear form over $F$. Then there is a  quadratic form $(V\otimes_F W, b\otimes q)$ over $F$  where the quadratic  map 
  %$b\otimes q:V\otimes_F W\rightarrow F$ is determined by 
%$(b\otimes q)( v\otimes w)= b(v,v)\cdot q(w) $
%for all $w\in W, v\in V$  (see \cite[p.51]{Elman:2008}). We call the quadratic form $(V\otimes_F W, b\otimes q)$ the \emph{tensor product of $\varphi$ and $\rho$}, and we denote it by $\varphi\otimes \rho$. 
For a quadratic form $\rho$ over $F$, we say that $\varphi$ \emph{factors $\rho$} if there exists a quadratic form $\rho'$ over $F$ such that $\rho\simeq \varphi\otimes \rho'$. Similarly, we say a quadratic form $\rho'$ over $F$ \emph{factors $\rho$} if there exists a symmetric bilinear form $\varphi$ over $F$ such that $\rho\simeq \varphi\otimes\rho'$.

%Let $V$ be an $F$-vector space. 
%A quadratic form $(V,q)$ corresponds to a homogeneous quadratic polynomial in $n$ variables over $F$ by identifying $V$ via an $F$-basis with $F^n$ for $n=\dim_F(V)$.
%For $a_1,a_2\in F$ we denote by $[a_1,a_2]$ the quadratic form  $(F^2,q)$ where  $q:F^2\times F^2\rightarrow F$ is given by $(x,y)\mapsto a_1x^2 + xy + a_2y^2.$ We call the quadratic form $[0,0]$ the \emph{hyperbolic plane} and denote it by $\HH$.

%Let  $\rho$ be a nonsingular quadratic form over $F$.  Then there exists nonsingular quadratic forms $\rho'$ and $\rho''$ over $F$ with $\rho'$ anisotropic and $\rho''$ hyperbolic
 %such that \linebreak $(V,q)\simeq \rho'\perp \rho''$.  In this decomposition  $\rho'$  is uniquely determined up to isometry. We  call $\rho'$ \emph{the anisotropic part of $\rho$} (see  \cite[(8.5)]{Elman:2008}), and we denote it by $\rho_\an$.
%We say that two nonsingular quadratic forms $\rho$ and $\pi$ over $F$ are \emph{Witt equivalent} if $\rho_\an\simeq \pi_\an$. 
%This is an equivalence relation on the set of nonsingular quadratic forms over $F$.% and the equivalence classes form  the \emph{Witt group of  nonsingular quadratic forms over $F$}, which we denote by $W_q(F)$. The group operation in $W_q(F)$ is induced by the orthogonal sum  (see \cite[\S8]{Elman:2008}). We denote the class of $\rho$ in $W_q(F)$ by  $[\rho]$.

For a positive  integer $m$, by an \emph{$m$-fold (quadratic) Pfister form over $F$}  we mean a quadratic form that is isometric to the tensor product of a $2$-dimensional nonsingular quadratic form representing $1$ and an $(m-1)$-fold bilinear Pfister form over $F$.
 Pfister forms are either anisotropic or hyperbolic (see \cite[(9.10)]{Elman:2008}).

% The following result is well known, but we include a proof for completeness. 

%\begin{lemma}\label{cor:roundpf} 
%Let $\pi$ and $\rho$ be quadratic forms over $F$ such that $\pi$ is a Pfister form and $\rho$ represents $1$. Then $\pi$ is similar to $\rho$ if and only if $\pi\simeq \rho$.
%\end{lemma}
%\begin{proof}
%To show the nontrivial implication, assume that $a\in\mg{F}$ is such that $\pi\simeq a\rho$. Then as $\rho$ represents $1$ we have that $\pi$ represents $a$. By   \cite[(9.9)]{Elman:2008} we have that $\pi\simeq b\cdot \pi$ for all elements represented by $\pi$, hence 
%$\rho\simeq a^2\rho\simeq  a\pi \simeq \pi\,.$ 
%\end{proof}

 %By a \emph{rational function field over $F$} we mean a field extension $K/F$ such that there exist $n\in\NN$ and elements $t_1,\dots,t_n\in K$ algebraically independent over $F$ such that $K=F(t_1,\dots,t_n)$.%; in this case $K/F$ is finitely generated of transcendence degree $n$.

Let $\rho$ be a regular quadratic form over $F$.
If $\dim(\rho)\geqslant 3$ or if $\rho$ is anisotropic and $\dim(\rho)=2$, then we call the function field of the projective quadric over $F$ given by $\rho$ the \emph{function field of $\rho$} and denote it by $F(\rho)$. In the remaining cases we set $F(\rho)=F$.
This agrees with the definition in \cite[\S22]{Elman:2008}.
%By \cite[(22.9)]{Elman:2008}
%then $F(\rho)/F$ is a rational function field over $F$ if and only if $\rho$ is isotropic over $F$.
For an anisotropic symmetric bilinear form $\varphi$, the  quadratic form $\rho$ associated to $\varphi$ is regular.  We call $F(\rho)$ the \emph{function field of $\varphi$} and we denote it by $F(\varphi)$. 
Let $K/F$ be a field extension. Then we write $(V,q)_K=(V\otimes_F K, q_K) $ where $q_K$ is  the unique quadratic map such that $q_K( v\otimes k )= k^2q(v)$ for all $v\in V$ and $k\in K$.  %This induces a natural inclusion $W_q(F)\rightarrow W_q(K)$.

\begin{prop}\label{prop:funcfieldbf}
Let $\rho$ be a nonsingular quadratic  form and let $\varphi$ be an anisotropic  bilinear Pfister form over $F$. If $\rho_{F(\varphi)}$ is hyperbolic then either $\rho$ is hyperbolic or  $\varphi$ factors $\rho_\an$. 
\end{prop}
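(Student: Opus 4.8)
The plan is to reduce to the case where $\rho$ is anisotropic and then to produce a quadratic form $\rho'$ with $\rho\simeq\varphi\otimes\rho'$. Since $\rho$ is Witt equivalent to $\rho_\an$ while its hyperbolic part stays hyperbolic over any field extension, $\rho_{F(\varphi)}$ is hyperbolic if and only if $(\rho_\an)_{F(\varphi)}$ is, and $\rho$ is hyperbolic precisely when $\rho_\an=0$. So one may replace $\rho$ by $\rho_\an$ and assume that $\rho$ is anisotropic and nonzero with $\rho_{F(\varphi)}$ hyperbolic; it then suffices to show that $\varphi$ factors $\rho$. If $\kar(F)\neq 2$, then, under the usual identification of bilinear and quadratic forms, $\varphi$ is a quadratic Pfister form and $F(\varphi)$ is its quadric function field, so this is the classical consequence of the Cassels--Pfister subform theorem (see \cite{Elman:2008}); thus I assume $\kar(F)=2$ from now on.

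In characteristic two I would argue by induction on the number $m$ of slots of $\varphi$, writing $\varphi\simeq\qf{1,a}\otimes\varphi_1$ with $\varphi_1$ an $(m-1)$-fold bilinear Pfister form. In the base case $m=1$ the function field $F(\varphi)$ is the purely inseparable quadratic extension $F(\sqrt a)$, since it is the function field of the $2$-dimensional totally singular quadratic form $x^2+ay^2$ associated to $\qf{1,a}$; the desired divisibility of $\rho$ by $\qf{1,a}$ is then the content of the characteristic-two analogue of the fundamental exact sequence, namely that the kernel of the restriction map $W_q(F)\to W_q(F(\sqrt a))$ equals $\qf{1,a}\otimes W_q(F)$, together with the standard upgrade of a Witt equivalence $\rho\sim\qf{1,a}\otimes\sigma$ to an isometry.

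For the inductive step, $m\geq 2$, the key geometric input is that over $F(\sqrt a)$ the totally singular form $\bar\varphi$ associated to $\varphi$ becomes isometric to $\bar\varphi_1\perp\qf{0,\ldots,0}$, where $\bar\varphi_1$ is the (still anisotropic) totally singular form associated to $\varphi_1$ --- this uses that a totally singular quadratic form $\psi$ is additive in characteristic two, so that $\psi\perp\psi\simeq\psi\perp\qf{0,\ldots,0}$. Consequently $F(\sqrt a)(\varphi)$ is purely transcendental over $F(\sqrt a)(\varphi_1)$, and likewise $F(\varphi_1)(\varphi)$ is purely transcendental over $F(\varphi_1)$ because $\varphi$ becomes metabolic over $F(\varphi_1)$; hyperbolicity of $\rho$ over $F(\varphi)$ therefore descends to $\rho_{F(\varphi_1)}$ and to $(\rho_{F(\sqrt a)})_{F(\sqrt a)(\varphi_1)}$. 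Applying the inductive hypothesis over $F$ (to $\rho$ and $\varphi_1$) yields a factorization of $\rho$ by $\varphi_1$, and applying it over $F(\sqrt a)$ controls $\rho_{F(\sqrt a)}$; the two are then amalgamated by means of the Scharlau transfer for $F(\sqrt a)/F$ --- the composite of restriction with transfer along an $F$-linear functional not vanishing at $1$ is multiplication by $\qf{1,a}$ --- together with the roundness of $\varphi$ (that $D(\varphi)=G(\varphi)$), which absorbs the scalar that appears.

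I expect this amalgamation to be the main obstacle: divisibility of $\rho$ by the individual Pfister factors does not on its own force divisibility by their tensor product $\varphi$, so the full strength of the hypothesis --- hyperbolicity over $F(\varphi)$ itself, not merely over function fields of proper sub-Pfister forms --- has to be extracted through the transfer identity and roundness, and keeping track of anisotropic parts versus full forms through the descent requires care. A secondary technical point, needed throughout, is the passage from a Witt equivalence $\rho\sim\varphi\otimes\sigma$ with $\rho$ anisotropic to an honest isometry: taking $\sigma$ anisotropic, one shows $\varphi\otimes\sigma$ is divisible by $\varphi$ by induction on $\dim\sigma$, repeatedly splitting off a $2$-dimensional nonsingular subform $\sigma_0\subseteq\sigma$ for which $\varphi\otimes\sigma_0$, being similar to a quadratic Pfister form, is hyperbolic, and using that $\varphi\otimes b$ is hyperbolic for every metabolic bilinear form $b$.
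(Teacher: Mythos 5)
The paper does not actually prove this proposition: the proof is a citation to Hoffmann--Laghribi, and in characteristic two the statement is a substantial theorem in its own right. Your reduction to $\rho$ anisotropic, the case $\kar(F)\neq 2$, the base case $m=1$ via $W_q(F(\sqrt a)/F)=\qf{1,a}\otimes W_q(F)$, and the upgrade from Witt equivalence to isometry are all fine. The inductive step, however, contains one false claim and one unfilled gap, and the gap is exactly where the content of the theorem lies.

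The false claim is that $F(\varphi_1)(\varphi)$ is purely transcendental over $F(\varphi_1)$ because $\varphi_{F(\varphi_1)}$ is metabolic. Metabolicity of the bilinear form is irrelevant here; what governs rationality of the quadric is the quasilinear anisotropic part of the associated totally singular form $\bar\varphi$ over $F(\varphi_1)$, which in general has dimension at least $2$. Concretely, for $\varphi=\pff{a,b}$ and $\varphi_1=\pff{b}$ one has $\bar\varphi_{F(\sqrt b)}\simeq (x_1+\sqrt b\,x_3)^2+a(x_2+\sqrt b\,x_4)^2$, whose quadric is a cone over $\mathrm{Spec}\,F(\sqrt a,\sqrt b)$; hence $F(\varphi_1)(\varphi)=F(\sqrt a,\sqrt b)(t_1,t_2)$ is purely transcendental over $F(\sqrt a,\sqrt b)$ but not over $F(\sqrt b)$ (as $a\notin F^2(b)$ by anisotropy of $\varphi$). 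So hyperbolicity of $\rho_{F(\varphi)}$ does not descend to $F(\varphi_1)$ by your argument, and your first application of the inductive hypothesis is unjustified. More fundamentally, what your descent chain genuinely yields is hyperbolicity of $\rho$ over multiquadratic extensions such as $F(\sqrt a,\sqrt b)$ together with Witt-level divisibility by the individual binary factors $\qf{1,a_i}$; this is strictly weaker information than the hypothesis, since $W_q(F(\sqrt a,\sqrt b)/F)$ contains $\qf{1,a}W_q(F)+\qf{1,b}W_q(F)$, which is in general larger than $\pff{a,b}\otimes W_q(F)$. Consequently no bookkeeping with the transfer identity $s_*\circ r^*=\qf{1,a}\cdot(-)$ and roundness can recover $\varphi\mid\rho_\an$ from these data --- that identity only re-derives the base case. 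The ``amalgamation'' you flag as the main obstacle is therefore not a technical point to be smoothed over but is the theorem itself; the proof of Hoffmann and Laghribi avoids it entirely, arguing instead through norm fields and similarity factors of quasilinear forms (hyperbolicity over $F(\bar\varphi)$ forces the values of $\bar\varphi$ to be similarity factors of $\rho_\an$, and for a bilinear Pfister form this implies divisibility). As it stands, your proposal is a plausible framework that terminates exactly where the real difficulty begins.
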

\begin{proof}  See  \cite[(5.2)]{HoffmannLaghribi:Isoqfffquadricc2}. 
%If $\pi$ is hyperbolic, then we may choose $\rho$ to be any hyperbolic $(m-n)$-fold Pfister form over $F$. 
%If $\kar(F)\neq2$ the result follows immediately from \cite[(23.6) and (24.1)]{Elman:2008} and the one-to-one correspondence between quadratic forms and symmetric bilinear forms.
%Otherwise, by \cite[(1.4)]{Laghribi:wittkerffc2}, we have that $\pi\simeq \varphi'\otimes \rho'$ for some quadratic form $\rho'$ and  an $n$-fold bilinear Pfister form $\varphi'$ over $F$ such that 
%$\dim(\varphi)=\dim(\varphi')$ and $D(\varphi)=D(\varphi')$. 
%It follows that the  quadratic forms associated to $\varphi$ and $\varphi'$ are isometric (see \cite[(8.1)]{HoffmannLaghribi:qfpfisterneigbourc2}).
%By  \cite[(9.9)]{Elman:2008}, we have that $a\cdot\pi\simeq \pi$ for any element represented by $\pi$. In particular, as $\varphi'$ represents $1$, any element represented by $\rho$ is represented by $\pi$, and hence we can scale $\pi$ to assume that $\rho$ represents $1$. Therefore the quadratic form associated to $\varphi'$ and $\varphi$ is a subform of $\pi$ in the sense of \cite[\S 7, p.41]{Elman:2008}.  
 %By  \cite[(24.1,(2))]{Elman:2008} we then have that $\pi\simeq \varphi\otimes\rho$ for some $(m-n)$-fold Pfister form $\rho$.
\end{proof}
%
%the associated quadratic form to $\varphi'$ is isometric to the associated quadratic form to $\varphi$. By   \cite[(24.1)]{Elman:2008} we may assume that $\rho'$ is an $(m-n)$-fold Pfister form.
%
%Let $b_1,\ldots, b_m\in F^\times$ be such that $\varphi\simeq\pff{b_1,\ldots, b_m}$. Then there exists $x_1\ldots, x_m\in F$ and $y_1,\ldots, y_m\in F^\times$ such that  $\varphi'\simeq \pff{x_1^2 + y_1^2b_1,\ldots,x_m^2 + y_m^2b_m}$.  As we can write $\varphi'\otimes\rho$ as $ \qf{1}_b\otimes \varphi'\otimes\rho' $ and $\qf{1}_b$ represents all squares in $F$, it follows from  \cite[(15.6)]{Elman:2008} and the isometry $\qf{c^2 d}_b\simeq \qf{d}_b$ for all $c,d\in F^\times$ that there exists an $(m-n)$-fold Pfister form $\rho''$ such that
%$$ \pff{x_1^2 + y_1^2b_1,\ldots,x_m^2 + y_m^2b_m} \otimes\rho'\simeq \pff{b_1,x_2^2 + y_2^2b_2, \ldots,x_m^2 + y_m^2b_m} \otimes\rho''\,. $$
%Repeating this argument for the other entries in the bilinear Pfister form gives the result.

\section{Algebras with involution}\label{section:basicsalgs}

We refer to \cite{pierce:1982} as a general reference on finite-dimensional algebras over fields, and for central simple algebras in particular, and to \cite{Knus:1998} for involutions.
Let $A$ be an (associative) $F$-algebra.  We denote the centre of $A$ by $Z(A)$.
For a field extension $K/F$, the $K$-algebra $A\otimes_F K$ is denoted by $A_K$.
An element $e\in A$ is called an \emph{idempotent} if $e^2=e$.
 An $F$-\emph{involution on $A$} is an $F$-linear map $\sigma:A\rightarrow A$ such that  $\sigma(xy)=\sigma(y)\sigma(x)$ for all $x,y\in A$ and  $\sigma^2=\textrm{id}_A$. 

%In this section we recall the basic definitions and results we use on central simple algebras with involution. 

Assume now that $A$ is finite-dimensional and simple  (i.e.~it has no nontrivial two-sided ideals).
By Wedderburn's Theorem (see \cite[(1.1)]{Knus:1998}), $A\simeq\End_D(V)$ for a finite-dimensional $F$-division algebra $D$  and a finite-dimensional right $D$-vector space $V$.
Furthermore, the centre of $A$ is a field and   $\dim_{Z(A)}(A)$ is a square number, whose positive square root is called the \emph{degree of $A$} and is denoted $\mathrm{deg}(A)$. The degree of $D$ is called the \emph{index of $A$} and denoted $\mathrm{ind}(A)$. We call $A$ \emph{split} if $\mathrm{ind}(A)=1$, that is $A\simeq \End_F(V)$ for some finite-dimensional right $F$-vector space $V$. 
If $Z(A)=F$, then we call the $F$-algebra $A$ \emph{central simple} and 
we call a field extension $K/F$ such that  $A_K$ is split a \emph{splitting field of $A$}.  
%Two central simple $F$-algebras $A$ and $B$ are called \emph{Brauer equivalent} if $A$ and $B$ are isomorphic to endomorphism algebras of two right vector spaces over the same $F$-division algebra. 
If $A$ is a central simple $F$-algebra then we denote $\Trd_A:A\lra F$ the reduced trace map  and $\Nrd_A:A\lra F$ the reduced norm map, as defined in \cite[(1.6)]{Knus:1998}.

By an \emph{$F$-algebra with involution} we mean a pair $(A,\sigma)$ of a finite-dimensional central simple $F$-algebra $A$ and an $F$-involution $\sigma$ on $A$ (note that we only consider involutions that are  linear with respect to the centre of $A$, that is involutions of the first kind, here).  We use the following notation:
$\mathrm{Sym}(A,\sigma)= \{ a\in A\mid\sigma(a)=a\}$,
$\mathrm{Skew}(A,\sigma)= \{ a\in A\mid\sigma(a)=-a\}$ and
$\Alt(A,\sigma)= \{ a-\sigma( a)\,|\, a\in A\}$.
These are $F$-linear subspaces of $A$.

Let  $(A,\s)$ and $(B,\tau)$ be $F$-algebras with involution.
By an \emph{isomorphism of $F$-algebras with involution}  $\Phi:(A,\sigma)\rightarrow(B,\tau)$ we mean an $F$-algebra isomorphism $\Phi: A\rightarrow B$ satisfying $\Phi\circ\sigma=\tau\circ\Phi$.
On the $F$-algebra $A\otimes_F B$ we obtain an $F$-involution $\s\otimes \tau$, whereby $(A\otimes_F B,\sigma\otimes \tau)$ is an $F$-algebra with involution, which we denote by $(A,\sigma)\otimes(B,\tau)$.
For a field extension $K/F$ we write $(A,\s)_K=(A\otimes_F K,\s\otimes\mathrm{id})$.

We call $(A,\sigma)$ \emph{isotropic}  if  there exists $a\in A\backslash\{0\}$ such that  $\sigma(a)a=0$, and \emph{anisotropic} otherwise.  An idempotent  $e\in A$ is called  \emph{metabolic with respect to $\sigma$}  if $\sigma(e)e=0$ and $\dim_FeA=\frac{1}{2}\dim_F A$. We call $(A,\sigma)$   \emph{metabolic}  if $A$ contains a  metabolic idempotent element with respect to $\sigma$. For more information on metabolic involutions, see \cite{dolphin:metainv}.

%\begin{prop}\label{prop:metabf}
%Let $\varphi$ be a nondegenerate alternating or symmetric bilinear form over $F$. Then  $\varphi$ is isotropic (resp. metabolic) if and only if $\Ad(\varphi)$ is isotropic (resp. metabolic).
%\end{prop}
 %\begin{proof} See \cite[(4.3)]{dolphin:orthpfist} for the statement of isotropic and  \cite[(4.8)]{dolphin:metainv} for the statement on metabolicity.
% \end{proof}

To every nondegenerate symmetric or alternating bilinear form $\varphi=(V,b)$ over $F$ we can associate an algebra with involution   in the following way. Let $A=\End_F(V)$. Then there is a unique involution $\sigma$ on $A$ such that 
$$b(x,f(y))=b(\sigma(f)(x),y) \quad \textrm{ for all } x,y\in V \textrm{ and all } f\in A .$$
We denote this $F$-involution on $A$ by $\ad_b$.
We call $(A,\ad_b)$ the \emph{$F$-algebra with involution adjoint to $\varphi$} and we denote it by $\Ad(\varphi)$. For every split $F$-algebra with involution $(A,\sigma)$, there exists a nondegenerate symmetric or alternating bilinear form $\psi$ over $F$ such that $(A,\sigma)\simeq\Ad(\psi)$ (see \cite[(2.1)]{Knus:1998}). The following is well-known, but we include a proof for completeness.

\begin{prop}\label{lemma:tensorbf} Let $\varphi$ and $\psi$ be  nondegenerate symmetric   bilinear forms over $F$. Then 
$\Ad(\varphi\otimes \psi)\simeq \Ad(\varphi)\otimes \Ad(\psi). $
\end{prop}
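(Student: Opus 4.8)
The plan is to realize both sides concretely on the tensor product of the underlying vector spaces and exhibit an explicit isomorphism of algebras with involution. Write $\varphi = (V, b)$ and $\psi = (W, c)$, so that $\varphi \otimes \psi = (V \otimes_F W, b \otimes c)$, where $(b \otimes c)(v \otimes w, v' \otimes w') = b(v,v')\,c(w,w')$ extended bilinearly; since $\varphi$ and $\psi$ are nondegenerate, so is $\varphi \otimes \psi$. First I would invoke the standard canonical isomorphism of $F$-algebras $\Psi \colon \End_F(V) \otimes_F \End_F(W) \xrightarrow{\sim} \End_F(V \otimes_F W)$ sending $f \otimes g$ to the endomorphism $v \otimes w \mapsto f(v) \otimes g(w)$. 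The content of the proposition is then that under $\Psi$ the involution $\ad_b \otimes \ad_c$ corresponds to $\ad_{b \otimes c}$; that is, $\Psi \circ (\ad_b \otimes \ad_c) = \ad_{b \otimes c} \circ \Psi$.

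To verify this it suffices, by $F$-bilinearity and the fact that simple tensors $f \otimes g$ span $\End_F(V)\otimes_F\End_F(W)$, to check the identity on an element of the form $f \otimes g$ with $f \in \End_F(V)$, $g \in \End_F(W)$. On one hand, $(\ad_b \otimes \ad_c)(f \otimes g) = \ad_b(f) \otimes \ad_c(g)$, and applying $\Psi$ gives the endomorphism $v \otimes w \mapsto \ad_b(f)(v) \otimes \ad_c(g)(w)$ of $V \otimes_F W$. On the other hand, $\ad_{b\otimes c}$ of the endomorphism $h := \Psi(f \otimes g)$ is characterized by $(b \otimes c)\bigl(x, h(y)\bigr) = (b \otimes c)\bigl(\ad_{b\otimes c}(h)(x), y\bigr)$ for all $x, y \in V \otimes_F W$. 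I would test this on simple tensors $x = v \otimes w$, $y = v' \otimes w'$: the left-hand side is $b(v, f(v'))\,c(w, g(w'))$, which by the defining property of $\ad_b$ and $\ad_c$ equals $b(\ad_b(f)(v), v')\,c(\ad_c(g)(w), w') = (b\otimes c)\bigl(\ad_b(f)(v)\otimes \ad_c(g)(w),\, v'\otimes w'\bigr)$. Since simple tensors span and $b \otimes c$ is nondegenerate, this shows $\ad_{b\otimes c}(h)$ is exactly the endomorphism $v\otimes w \mapsto \ad_b(f)(v)\otimes\ad_c(g)(w)$, which is precisely $\Psi(\ad_b(f)\otimes\ad_c(g))$. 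Hence the two involutions agree after transport along $\Psi$, so $\Psi$ is an isomorphism $\Ad(\varphi) \otimes \Ad(\psi) \xrightarrow{\sim} \Ad(\varphi \otimes \psi)$.

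There is no serious obstacle here; the only points requiring a little care are the bookkeeping of which adjoint involution acts on which factor and the reduction from general elements to simple tensors, which is legitimate because the adjoint involution is $F$-linear and the tensor product of bilinear forms is $F$-bilinear in the relevant sense. One should also note at the outset that $b \otimes c$ is genuinely symmetric when $b$ and $c$ both are (the case stated), so that $\ad_{b\otimes c}$ is defined and is an involution; everything used — existence and uniqueness of $\ad_b$, the characterizing identity, and the canonical identification of the endomorphism algebras — is recorded in \cite[(2.1)]{Knus:1998} and the preceding discussion.
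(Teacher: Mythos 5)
Your proof is correct and follows essentially the same route as the paper's: identify $\End_F(V)\otimes_F\End_F(W)$ with $\End_F(V\otimes_F W)$ via the canonical isomorphism, verify the adjoint identity on simple tensors $f\otimes g$ tested against simple tensors of $V\otimes_F W$, and conclude by bilinearity that $\ad_{b\otimes c}(f\otimes g)=\ad_b(f)\otimes\ad_c(g)$. Your explicit appeal to nondegeneracy of $b\otimes c$ to pin down the adjoint is a minor extra precaution the paper leaves implicit.
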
 
\begin{proof} Let $\varphi=(V,b)$ and $\psi=(W,b')$.
Let  $f\in \End_F(V)$ and $g\in \End_F(W)$. Then for all  $u,v\in V$,  $w,t\in W$ we have
\begin{eqnarray*}(b\otimes b') (f\otimes g(u\otimes w), (v\otimes t))& =&(b\otimes b')((f(u)\otimes g(w)), (v\otimes t))
\\ &= &b(f(u), v)\cdot b'(g(w), t) 
\\ &=&  b(u, \ad_b(f)(v))\cdot b'(w, \ad_{b'}(g)(t))
\\ &=& (b\otimes b') ((u\otimes w), (\ad_b(f)(v)\otimes \ad_{b'}(g)(t)))\,.
\end{eqnarray*}
Therefore, by bilinearity of $b\otimes b'$, we have that $\ad_{b\otimes b'}(f\otimes g)= \ad_b(f)\otimes \ad_b(g) $.
Using this, it follows from the linearity of $\ad_{b\otimes b'}$  that the natural isomorphism of $F$-algebras $\Phi:\End_F(V)\otimes_F\End_F(W)\rightarrow \End_{F}(V\otimes_F W)$ is   an isomorphism of the $F$-algebras with involution in the statement.
%Consider the  natural  isomorphism of $F$-algebras $\Phi:\End_F(V)\otimes_F\End_F(W)\rightarrow \End_{F}(V\otimes_F W).$
%Using this isomorphism %,   every element  in  $\End_F(V\otimes_F W)$ 
%can be written as   $f_1\otimes g_1+\ldots + f_n\otimes g_n$
%for some for some $n\in\mathbb{N}$,  $f_1,\ldots, f_n\in \End_F(V)$ and $g_1,\ldots, g_n\in \End_F(W)$.
%Therefore, as 
%and that $\ad_{b\otimes b'}(f\otimes g)= \ad_b(f)\otimes \ad_b(g) $ for all $f\in \End_F(V)$ and $g\in \End_F(W)$, it follows  by the linearity of $\ad_{b\otimes b'}$ that  $\Phi$ is an isomorphism of $F$-algebras with involution.
\end{proof}

We distinguish two \emph{types} of $F$-algebras with involution. An $F$-algebra with involution  is said to be \emph{symplectic} if it becomes adjoint to an alternating bilinear form over some splitting field of the associated $F$-algebra, and \emph{orthogonal} otherwise.  
In characteristic different from two, these types are distinguished by the dimensions of the spaces of symmetric and alternating elements. However in characteristic two these dimensions do not depend on the type (see \cite[(2.6)]{Knus:1998}).

An $F$-\emph{quaternion algebra} is a central simple $F$-algebra of degree $2$. 
 Let $Q$ be an $F$-quaternion algebra.
By  \cite[(2.21)]{Knus:1998}, the map $Q\rightarrow Q,$  $x\mapsto \Trd_Q(x)-x$ is the unique symplectic involution on $Q$; it is called the \emph{canonical involution of $Q$}.
Any $F$-quaternion algebra has a basis $(1,u,v,w)$ such that
%\begin{eqnarray}\label{eqnarray:qatbas}u^2 =u+a, v^2=b\,\textrm{ and }\, w=uv=v-vu\,,\end{eqnarray}
$$u^2 =u+a, v^2=b\,\textrm{ and }\, w=uv=v-vu\,,$$
for some  $a\in F$ with $-4a\neq 1$ and $b\in F^\times$
 (see  \cite[Chap.~IX, Thm.~26]{Albert:1968});  such a basis is called an \emph{$F$-quaternion basis}.
 Conversely, for  $a\in F$ with $-4a\neq 1$ and $b\in F^\times$
 the above relations uniquely determine an $F$-quaternion algebra (up to $F$-isomorphism), which we denote by $[a,b)$. 
By the above, up to isomorphism any $F$-quaternion algebra is of this form. The following result can be recovered from \cite[p.104, Thm.~4]{Draxl:1983}, but we include a direct argument. 

\begin{lemma}\label{qrep}
Let $Q$ be an $F$-quaternion algebra and  $v\in Q\setminus F$ be such that $v^2\in F^\times$. There exist an element and $u\in Q$ such that $uv=v(1-u)$ and $u^2-u=a$ for some $a\in F$ with $-4a\neq 1$. That is, $(1,u,v,uv)$ is an $F$-quaternion basis of $Q$.
\end{lemma}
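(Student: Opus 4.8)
The plan is to reformulate the desired relation and solve it by a dimension count. Since $v^2\in F^\times$, the element $v$ is invertible; since $v\notin F$, the reduced characteristic polynomial of $v$ is $X^2-v^2$, so $\Trd_Q(v)=0$ and $\Nrd_Q(v)=-v^2$, and the centralizer $C_Q(v)$ equals $F[v]$, which is $2$-dimensional over $F$ (a standard fact, seen by extending scalars to split $Q$). The desired relation $uv=v(1-u)$ is equivalent to $vuv^{-1}=1-u$, i.e.\ to $\Phi(u)=1$, where $\Phi:=\id+\Int(v)$ is regarded as an $F$-linear endomorphism of $Q$ and $\Int(v)\colon x\mapsto vxv^{-1}$; note $\Int(v)^2=\Int(v^2)=\id$. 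The set $\Phi^{-1}(1)$, if nonempty, is a coset of $\Ker\Phi$, and the heart of the matter is to find in it an element lying outside $F$ and (in odd characteristic) avoiding a certain degenerate locus.

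If $\kar F\neq 2$, then $\Int(v)$ is diagonalisable and $Q=Q_+\oplus Q_-$ decomposes into its $(\pm1)$-eigenspaces, with $Q_+=C_Q(v)$; thus $\Phi$ is multiplication by $2$ on $Q_+$ and by $0$ on $Q_-$, whence $\Phi(\tfrac12)=1$ and $\Phi^{-1}(1)=\tfrac12+Q_-$. If $\kar F=2$, then $N:=\Phi=\Int(v)-\id$ satisfies $N^2=\Int(v)^2-2\Int(v)+\id=0$, so $\Im N\subseteq\Ker N=C_Q(v)$; comparing dimensions ($\dim_F\Im N=4-2=2=\dim_F C_Q(v)$) forces $\Im N=C_Q(v)\ni 1$, so $\Phi^{-1}(1)$ is a coset of the $2$-dimensional space $C_Q(v)$, hence is not contained in $F$.

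I now pick $u\in\Phi^{-1}(1)\setminus F$: in characteristic $2$ this is immediate from the above, while in characteristic different from $2$ I take $u=\tfrac12+x$ with $x\in Q_-$ chosen so that $x^2\neq 0$ --- this is possible because $Q_-\subseteq\Ker\Trd_Q$, where $x^2=-\Nrd_Q(x)$, and the reduced norm form restricted to $\Ker\Trd_Q=\langle 1\rangle^{\perp}$ is a nondegenerate ternary quadratic form, hence of Witt index at most $1$, so it cannot vanish on the $2$-dimensional subspace $Q_-$. For any such $u$, conjugation invariance of $\Nrd_Q$ together with $vuv^{-1}=1-u$ give $\Nrd_Q(u)=\Nrd_Q(1-u)=(1-u)\bigl(1-\Trd_Q(u)+u\bigr)=1-\Trd_Q(u)+\Nrd_Q(u)$, so $\Trd_Q(u)=1$. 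Hence $u^2=\Trd_Q(u)u-\Nrd_Q(u)=u-\Nrd_Q(u)$, i.e.\ $u^2-u=a$ with $a:=-\Nrd_Q(u)\in F$, and $1+4a=\Trd_Q(u)^2-4\Nrd_Q(u)$ is the discriminant of the reduced characteristic polynomial of $u$; this equals $1\neq 0$ when $\kar F=2$ and equals $4x^2\neq 0$ when $\kar F\neq 2$ (as $\Nrd_Q(u)=\tfrac14-x^2$), so $-4a\neq 1$ in all cases.

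Finally, to see that $(1,u,v,uv)$ is a basis --- and hence, since $u^2=u+a$ with $-4a\neq1$, $v^2\in F^\times$, and $uv=v-vu$, a quaternion basis of $Q$ --- it suffices to check linear independence after extending scalars to an algebraic closure $\Omega$, so that $Q_{\Omega}\cong M_2(\Omega)$. There the image of $u$ is non-scalar and satisfies the separable polynomial $X^2-X-a$, hence (independence being conjugation invariant) may be taken to be $\mathrm{diag}(\alpha,\alpha')$ with $\alpha\neq\alpha'$ and $\alpha+\alpha'=1$; then $uv+vu=v$ forces the diagonal entries of the image of $v$ to vanish (since $\alpha\neq\alpha'$), and $v^2\in F^\times$ forces its antidiagonal entries to be nonzero, so $\{1,u\}$ and $\{v,uv\}$ span (via their images) the diagonal and the antidiagonal matrices respectively, and the four elements are independent. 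I expect the main obstacle to be precisely the construction of a \emph{good} $u$ in the third step: not merely a solution of $uv+vu=v$ but one outside $F$ and, in odd characteristic, off the locus $\tfrac12+\{\text{nilpotents}\}$, which is what necessitates the case distinction and the Witt-index argument; everything afterwards is formal.
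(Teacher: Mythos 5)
Your proof is correct and follows essentially the same route as the paper's: in characteristic $2$ both arguments rest on the observation that the map built from (anti)commutation with $v$ (your $x\mapsto x+vxv^{-1}$, the paper's $x\mapsto xv+vx$, which differ only by right multiplication by $v$) has kernel and image equal to $C_Q(v)=F[v]$, and in characteristic $\neq 2$ both take $u=\tfrac12+x$ for an invertible $x$ anticommuting with $v$. You merely spell out in full the steps the paper treats as well known (the existence of such an invertible $x$ via the norm form on the trace-zero part, and the linear independence of $(1,u,v,uv)$).
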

\begin{proof}
If $\kar(F)\neq 2$ then
it is well-known that
there exists an invertible element $x\in Q$ such that $xv+vx=0$ and we set $u=x+\frac{1}2$.
Assume that $\kar(F)=2$.
Consider the $F$-linear map $Q\to Q, x\mapsto xv+vx$. Its kernel and its image are equal to $F[v]$.
Hence  $uv+vu=v$ for some $u\in Q$, and it follows that $u^2+u\in F[v]\cap F[u]=F$.
\end{proof}

%%%%%%%%%%%%%%%%%%%%%%%%%%%%%%%%%%%%%%

 With an $F$-quaternion basis $(1,u,v,w)$ of $Q$,
 we define $F$-involutions $\gamma$, $\s$ and $\tau$ on $Q$ via their action on $u$ and $v$ as follows. We let 
 \begin{eqnarray*}
 \gamma: & u\mapsto 1-u, &  v\mapsto-v\\
  \s: & u\mapsto 1-u,  &v\mapsto \hphantom{-}v\\
   \tau:& u\mapsto  u,\hphantom{-1} & v\mapsto -v\,. 
   \end{eqnarray*}
Note that $\gamma$ is the canonical involution on $Q$. Further, if $\kar(F)=2$, then $\gamma=\s$ and hence $\s$ is symplectic. Otherwise $\s$ is orthogonal.   We use the notation $$[a\qilr b)=(Q,\gamma), \quad  [a\qil b)=(Q,\s),\quad[a\qir b)= (Q,\tau)\,.$$

\begin{prop}\label{prop:orth}
Let $(Q,\s)$ be an $F$-quaternion algebra with orthogonal involution. Then there exists $a\in F$ with $-4a\neq 1$ and $b\in F^\times$ such that $(Q,\s)\simeq [a\qir b)$.
\end{prop}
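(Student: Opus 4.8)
The plan is to exhibit inside $Q$ an $F$-quaternion basis $(1,u,v,uv)$ with respect to which $\sigma$ acts exactly as the involution $\tau$ from the definition of $[a\qir b)$, that is $\sigma(u)=u$ and $\sigma(v)=-v$. Once this is done, reading off $a$ from $u^2-u=a$ and $b=v^2\in F^\times$, the identification of $Q$ with $[a,b)$ along this basis carries $\sigma$ to $\tau$, and $(Q,\sigma)\simeq[a\qir b)$ follows.

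To locate the element $v$, I would compare $\sigma$ with the canonical involution $\gamma$ of $Q$. Since $\sigma\circ\gamma$ is an $F$-algebra automorphism of $Q$, it is inner by the Skolem--Noether theorem, so $\sigma=\Int(g)\circ\gamma$ for some $g\in Q^\times$; the identity $\sigma^2=\id$ then forces $\gamma(g)=\mu g$ with $\mu\in F^\times$ and $\mu^2=1$. As $\gamma$ is symplectic while $\sigma$ is orthogonal we have $\sigma\neq\gamma$, hence $g\notin F^\times$. For $\kar(F)\neq 2$ this excludes $\mu=1$ (there $\gamma(x)=x$ forces $x\in F$), so $\gamma(g)=-g$, whereas for $\kar(F)=2$ one has $\mu=1$; in both cases $\Trd_Q(g)=g+\gamma(g)=0$. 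Thus $g$ is a trace-zero element, so $g^2=-\Nrd_Q(g)\in F^\times$ by invertibility of $g$, and I would set $v=g\in Q\setminus F$.

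I would then apply \cref{qrep} to $v$ to obtain $u\in Q$ with $uv=v(1-u)$ and $u^2-u=a$ for some $a\in F$ with $-4a\neq 1$, making $(1,u,v,uv)$ an $F$-quaternion basis of $Q$; put $b=v^2\in F^\times$. Since $u\notin F$ is a root of $X^2-X-a$, that is its reduced characteristic polynomial, so $\Trd_Q(u)=1$ and $\gamma(u)=\Trd_Q(u)-u=1-u$, while $\Trd_Q(v)=0$ gives $\gamma(v)=-v$. The verification is then a two-line computation: using $uv=v(1-u)$ one gets $\sigma(u)=\Int(v)(\gamma(u))=v(1-u)v^{-1}=(uv)v^{-1}=u$, and $\sigma(v)=\Int(v)(\gamma(v))=v(-v)v^{-1}=-v$. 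Since $\sigma$ and $\tau$ are $F$-linear anti-automorphisms agreeing on the algebra generators $u$ and $v$, they agree on all of $Q$, which finishes the proof.

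The hard part will be the second paragraph in characteristic two. There $\Sym(Q,\gamma)$ and $\Skew(Q,\gamma)$ coincide and do not detect the type, and $\gamma(g)=g$ and $\gamma(g)=-g$ are the same condition, so one cannot argue as in the classical case that $g$ is a nonscalar pure quaternion directly from $\gamma(g)=-g$; instead one must first invoke the orthogonal/symplectic distinction (together with $\gamma$ being the canonical, hence symplectic, involution) to force $g\notin F^\times$, and only then does $\Trd_Q(g)=0$ drop out.
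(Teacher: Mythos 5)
Your proof is correct and follows essentially the same route as the paper: write $\s=\Int(v)\circ\gamma$ with $v$ a non-central trace-zero unit, set $b=v^2$, invoke \cref{qrep} to produce $u$, and check $\s(u)=u$, $\s(v)=-v$. The only difference is that you rederive from Skolem--Noether (plus the orthogonal/symplectic distinction) the fact that $v$ can be taken in $\Skew(Q,\gamma)\setminus F$, which the paper simply cites from \cite[(2.21)]{Knus:1998}; your characteristic-two care about why $\gamma(g)=g$ alone does not force $g\in F$ is exactly the right point to worry about.
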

\begin{proof} 
Let $\gamma$ be the canonical involution of $Q$.
By \cite[(2.21)]{Knus:1998}, we have $\s=\Int(v)\circ \gamma$ for some invertible  element $v\in \mathrm{Skew}
(Q,\gamma)\backslash F$. 
Then
$v^2=-\gamma(v)v \in F^\times$ and we set $b=v^2$. By \cref{qrep}, there exists an element $u\in Q$ such
that  $uv=v(1-u)$  and $u^2-u=a$ for some $a\in F$ with $-4a\neq 1$. Hence $(1,u,v,uv)$ is an $F$-quaternion basis of $Q$ and we have that $\gamma(u)=1-u$ and $\gamma(v)=-v$. Hence $\s(u)=u$ and $\s(v)=-v$, and further $(Q,\s)\simeq [a\qir b)$.
\end{proof}

We call an $F$-algebra with involution \emph{totally decomposable} if it is isomorphic to a tensor product of $F$-quaternion algebras with involution. Note that by \cref{lemma:tensorbf}, the  $F$-algebra with involution adjoint to a bilinear Pfister form over $F$ is totally decomposable.

Let $(A,\sigma)$ be an $F$-algebra with orthogonal involution. 
By \cite[(7.1)]{Knus:1998}, for any $F$-algebra with orthogonal involution $(A,\s)$ with $\deg(A)$ even and  any $a,b\in \Alt(A,\s)$ we have $\Nrd_A(a)F^{\times 2}=\Nrd_A(b) F^{\times 2}$.
 Therefore, as in \cite[\S7]{Knus:1998}, we may make the following definition.  The \emph{determinant of $(A,\sigma)$}, denoted $\Delta(A,\sigma)$, is the square class of the reduced norm of an arbitrary  alternating unit, that is
$$\Delta(A,\sigma)= \mathrm{Nrd}_A(a)\cdot F^{\times2}\textrm{ in } F^\times/ F^{\times2} \quad \textrm{ for any} \,\,a\in \Alt(A,\sigma)\cap A^\times.$$

For the rest of  this section, we assume that $\kar(F)=2$.
Let $(A,\sigma)$ be a totally decomposable   $F$-algebra with orthogonal involution. That is  $$(A,\sigma)\simeq \bigotimes_{i=1}^n(Q_i,\sigma_i)$$ where $(Q_i,\sigma_i)$  are   $F$-algebras with involution for $i=1,\ldots, n$. 
Note that we must have that $(Q_i,\s_i)$ is orthogonal for all $i=1,\ldots, n$ by 
\cite[(2.23)]{Knus:1998}.
Let $d_i=\Delta(Q_i,\sigma_i)$. Then the bilinear Pfister form $\pi=\pff{d_1,\ldots, d_n}$ over $F$ does not depend on the choice of the $F$-quaternion algebras with involution $(Q_i,\s_i)$ in the decomposition of $(A,\s)$ by \cite[(7.3)]{dolphin:orthpfist}. We call this bilinear Pfister form \emph{the Pfister invariant of $(A,\sigma)$} and denote it by $\mathfrak{Pf}(A,\sigma)$. 
Note that by \cite[(7.3)]{dolphin:orthpfist}, for any field extension $K/F$ we have that $\mathfrak{Pf}((A,\s)_K)= (\mathfrak{Pf}(A,\s))_K$.

\begin{prop}\label{thm:pfisterinvar} Assume $\kar(F)=2$.
Let $(A,\sigma)$ be a totally decomposable    $F$--algebra with orthogonal involution. Then
$(A,\sigma)$ is anisotropic (resp.~metabolic) if and only if $\mathfrak{Pf}(A,\sigma)$ is anisotropic (resp.~metabolic).
\end{prop}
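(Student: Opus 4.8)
The plan is to prove both equivalences simultaneously by induction on the number $n$ of quaternion factors in a decomposition of $(A,\sigma)$, quantifying over all fields of characteristic two; throughout we use freely that a bilinear Pfister form is anisotropic or metabolic and that $\mathfrak{Pf}$ commutes with scalar extension. For $n=1$ one uses \cref{prop:orth} to write $(A,\sigma)\simeq[a\qir b)$ with $b\in F^\times$; then $v$ is an alternating unit with reduced norm $b$, so $\mathfrak{Pf}(A,\sigma)=\pff{b}=\qf{1,b}$, and a short argument shows that $(A,\sigma)$ is isotropic exactly when $b\in F^{\times 2}$ (which already forces $A$ to split), hence exactly when $\qf{1,b}$ is isotropic, i.e.\ metabolic, while otherwise both are anisotropic.

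For the inductive step write $(A,\sigma)\simeq(B,\tau)\otimes(Q,\rho)$ with $(B,\tau)$ totally decomposable orthogonal with $n-1$ factors and, by \cref{prop:orth}, $(Q,\rho)\simeq[a\qir b)$, so that $\pi:=\mathfrak{Pf}(A,\sigma)=\pi'\otimes\qf{1,b}$ where $\pi':=\mathfrak{Pf}(B,\tau)$. If $\pi'$ is metabolic then $(B,\tau)$ is metabolic by induction; if $b\in F^{\times 2}$ then $(Q,\rho)$ is metabolic by the case $n=1$. In either situation a metabolic idempotent $e$ of the relevant factor produces a metabolic idempotent ($e\otimes 1$ or $1\otimes e$) for $(A,\sigma)$ by a direct dimension count, and $\pi$ is manifestly metabolic too; this settles the case. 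So assume now that $\pi'$ is anisotropic and $b\notin F^{\times 2}$, so that $(B,\tau)$ and $(Q,\rho)$ are both anisotropic.

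In this case let $E=F(\sqrt b)$, which we realise as the purely inseparable subfield $F[v]\subseteq Q$. The centraliser of $1\otimes F[v]$ in $A$ is $B\otimes F[v]\cong B_E$, and $\sigma$ restricts there to $\tau_E$ since $\rho$ fixes $F[v]$. Viewing $A$ as a free left $B\otimes F[v]$-module of rank two, one sees that every isotropic element, and every metabolic idempotent, of $(B,\tau)_E$ is also one of $(A,\sigma)$; this gives the implications ``$(B,\tau)_E$ isotropic (resp.\ metabolic) $\Rightarrow (A,\sigma)$ isotropic (resp.\ metabolic)''. The reverse implication, deducing isotropy of $(B,\tau)_E$ from isotropy of $(A,\sigma)$, is equivalent to the assertion that anisotropy of $\mathfrak{Pf}(A,\sigma)$ forces $\sigma$ to be anisotropic; this is the substantive content of \cite{dolphin:orthpfist} and can be invoked from there, or proved by decomposing $A=B_E\oplus B_E u$ and splitting the relation $\sigma(z)z=0$ into its two $B_E$-components (using $\sigma(u)=u$, $u^2=u+a$ and $uv=vu+v$). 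Granting it, the induction hypothesis over $E$ identifies ``$(B,\tau)_E$ isotropic'' with ``$\pi'_E$ isotropic'', which by a short computation with the tower $F^2\subseteq F^2(b)\subseteq F^2(b,b_1,\dots,b_{n-1})$ (writing $\pi'=\pff{b_1,\dots,b_{n-1}}$) is in turn equivalent to ``$\pi=\pi'\otimes\qf{1,b}$ isotropic'', i.e.\ metabolic. Chaining these equivalences, and using that metabolic involutions are isotropic and that $\pi$ is anisotropic or metabolic, yields both ``$(A,\sigma)$ metabolic $\iff \pi$ metabolic'' and ``$(A,\sigma)$ anisotropic $\iff \pi$ anisotropic'', completing the induction.

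The step I expect to be the main obstacle is precisely this reverse implication: the descent of isotropy from $(A,\sigma)$ to $(B,\tau)_{F(\sqrt b)}$, equivalently the implication ``$\mathfrak{Pf}(A,\sigma)$ anisotropic $\Rightarrow (A,\sigma)$ anisotropic''. The base case, the two metabolic reductions, the centraliser identification and the degree computation over $F^2$ are comparatively routine.
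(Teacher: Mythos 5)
The paper does not actually prove this proposition: its ``proof'' is the citation \cite[(7.5)]{dolphin:orthpfist}. Your attempt is therefore more ambitious than the source, and most of its scaffolding is sound: the base case, the two metabolic reductions, the identification of the centraliser $B\otimes F[v]\cong B_E$ with $\sigma$ restricting to $\tau_E$, the ``going up'' implications from $(B,\tau)_E$ to $(A,\sigma)$, and the $F^2$-degree computation identifying isotropy of $\pi'_E$ with isotropy of $\pi'\otimes\qf{1,b}$ all check out. But there is a genuine gap exactly at the step you single out, and neither of the two repairs you offer closes it. Invoking ``the substantive content of \cite{dolphin:orthpfist}'' is circular: the implication you need --- $\mathfrak{Pf}(A,\sigma)$ anisotropic implies $(A,\sigma)$ anisotropic --- \emph{is} (half of) Proposition 7.5 of that paper, i.e.\ the statement under proof; your argument reduces the proposition to its hard direction and then cites the proposition.

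The computational fallback does not work either. Since $uv=vu+v$ in characteristic $2$, the element $u=1\otimes u$ does not centralise $B_E=B\otimes F[v]$: writing $\beta=\beta_0+\beta_1\otimes v$ one gets $u\beta=\beta u+\delta(\beta)$ with $\delta(\beta)=\beta_1\otimes v$. Expanding $\sigma(z)z=0$ for $z=z_0+z_1u$ and collecting the component along $1$ in $A=B_E\oplus B_Eu$ yields $\tau_E(z_0)z_0+a\,\tau_E(z_1)z_1+\delta(\tau_E(z_1))z_0+\tau_E(z_1)\delta(z_0)=0$, not the clean relation $\tau_E(z_0)z_0+a\,\tau_E(z_1)z_1=0$; the $\delta$-terms do not vanish in general. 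Moreover, even the clean relation would only be a similitude-type identity $\tau_E(z_0)z_0=a\,\tau_E(z_1)z_1$, from which isotropy of $(B,\tau)_E$ does not follow without a further (nontrivial) norm or transfer argument. So the descent step, which you correctly identify as carrying all the weight, remains unproved, and with it the whole proposition.
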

\begin{proof}
See \cite[(7.5)]{dolphin:orthpfist}.
\end{proof}

\section{Algebras with quadratic pair}

 We now recall the definition of and basic results we use on quadratic pairs.
% In characteristic different from $2$, a quadratic pair is an equivalent concept to an orthogonal involution (see \cite[p.56]{Knus:1998}). 
Let $(A,\sigma)$ be an $F$-algebra with involution.
We call an $F$-linear map $f:\Sym(A,\sigma)\rightarrow F$ a \emph{semi-trace on $(A,\sigma)$} if it satisfies $f(x+\sigma(x)) = \mathrm{Trd}_A(x)$  for all $x\in A$. By \cite[(4.3)]{dolphin:quadpairs}, if $\kar(F)\neq2$, then $\frac{1}{2}\Trd_A|_{\Sym(A,\sigma)}$ is the unique semi-trace on $(A,\sigma)$. On the other hand, if $\kar(F)=2$, then the existence of a semi-trace on $(A,\sigma)$ implies that $\Trd_A(\Sym(A,\sigma))=\{0\}$ and hence by \cite[(2.6)]{Knus:1998} that $(A,\sigma)$  is symplectic.  

Given an element $\ell\in A$ with $\ell+\s(\ell)=1$, the map $f:\Sym(A,\sigma)\rightarrow F$  given by  $x\mapsto \Trd_A(\ell x)$  is a semi-trace on $(A,\sigma)$, and conversely every semi-trace on $(A,\sigma)$ is of this form by \cite[(5.7)]{Knus:1998} (although  the case where $\kar(F)\neq 2$ and $(A,\s)$ is symplectic is excluded there, the same proof applies). In this case, we say that the semi-trace $f$ on $(A,\s)$ \emph{is given by $\ell$}.
% or just \emph{the semi-trace given by $\ell$} when $(A,\s)$ is clear from the context.
For another element $\ell'\in A$ such that $\ell'+\s(\ell')=1$, we have that 
$\ell$ and $\ell'$ give the same semi-trace on $(A,\s)$ if and only if $\ell-\ell'\in\Alt(A,\s)$ (see \cite[(5.7)]{Knus:1998}).

An \emph{$F$-algebra with quadratic pair} is a triple $(A,\sigma,f)$ where $(A,\sigma)$ is an $F$-algebra with involution, which is assumed to be orthogonal if $\kar(F)\neq 2$ and symplectic if $\kar(F)=2$, and where $f$ is a semi-trace on $(A,\sigma)$.
In  $\kar(F)\neq2$ the concept of an algebra with quadratic pair is  equivalent to the concept of an algebra with orthogonal involution, as 
 then the semi-trace given by $\frac{1}{2}$  is the  unique semi-trace on $(A,\sigma)$.

Given two $F$-algebras with quadratic pair $(A,\sigma,f)$ and $(B,\tau,g)$, by an \emph{isomorphism of  $F$-algebras with quadratic pair} $\Phi:(A,\sigma,f)\rightarrow(B,\tau,g)$ we mean an isomorphism of the underlying $F$-algebras with involution satisfying $ f=g\circ\Phi$.

Let $(A,\sigma,f)$ be an $F$-algebra with quadratic pair.  We call $(A,\sigma,f)$ \emph{isotropic} if there exists an element $s\in\Sym(A,\sigma)\backslash\{0\}$ such that $s^2=0$ and $f(s)=0$, and \emph{anisotropic} otherwise.  
In particular,  if $(A,\s,f)$ is isotropic, then $A$  has zero divisors.
We call an idempotent $e\in A$ \emph{hyperbolic with respect to $\sigma$ and $f$} if $\sigma(e)=1-e$ and $f(eA\cap \Sym(A,\sigma))=\{0\}$.
We say that the $F$-algebra with quadratic pair $(A,\sigma,f)$ is \emph{hyperbolic} if $A$ contains a hyperbolic  idempotent with respect to $\sigma$ and $f$.

 We describe, 
following \cite[\S5]{Knus:1998},
 how a nonsingular quadratic form gives rise to an algebra with quadratic pair.
Let $\rho=(V,q)$ be a nonsingular quadratic form over $F$ with polar form $(V,b_q)$. 
By declaring
$$(v_1\otimes w_1)\ast(v_2\otimes w_2)=b_q(w_1,v_2)\cdot(v_1\otimes w_2)\quad \textrm{ for }\quad \,v_1,v_2,w_1,w_2\in V\,$$
a product $\ast$ is defined on the tensor product $V\otimes_F V$ making it into an $F$-algebra.
By  declaring $\sigma(v\otimes w)=w\otimes v$ for $v,w\in V$ we obtain an $F$-involution $\s$ on the $F$-algebra $V\otimes_FV$. 
Then by \cite[(5.1)]{Knus:1998}, the $F$-linear map $\Phi:V\otimes_F V\rightarrow \End_F(V)$ determined by $$\Phi(u\otimes v)(w)= b_q(v,w) u\quad \mbox{ for } \, u,v,w\in V$$ yields an isomorphism of $F$-algebras with involution $\Ad(V,b_q)\lra (V\otimes_F V,\s)$.
% where $\s$ is the `switch map' $V\otimes V\lra V\otimes V$, given by $\sigma(v\otimes w)=w\otimes v$ for $v,w\in V$.
According to \cite[(5.11)]{Knus:1998} there is a unique semi-trace $f_q:\Sym(\Ad(V,b_q))\rightarrow F$  such that 
$f_q(\Phi(v\otimes v))=q(v)$  for   $v\in V,$ which yields an $F$-algebra with quadratic pair $$\Ad(\rho)=(\End_F(V),\ad_{b_q}, f_q)\,,$$ called the \emph{adjoint $F$-algebra with quadratic pair  of $\rho$}.
We say that an $F$-algebra with quadratic pair $(A,\sigma,f)$ is \emph{adjoint to  $\rho$} if $(A,\sigma,f)\simeq \Ad(\rho)$. 
By \cite[(5.11)]{Knus:1998}, for any split $F$-algebra with quadratic pair $(A,\s,f)$, there exists a nonsingular quadratic form $\rho$ over $F$ such that $(A,\sigma,f)\simeq\Ad(\rho)$, and for two nonsingular quadratic forms $\rho$ and $\rho'$ over $F$, we have that $\Ad(\rho)\simeq\Ad(\rho')$ if and only if $\rho$ and $\rho'$ are similar.

%\begin{prop}\label{prop:similar}
%Let $\rho_1$ and $\rho_2$ be quadratic forms over $F$. Then  $\Ad(\rho_1)\simeq\Ad( \rho_2)$ if and only if $\rho_1$ is similar to $\rho_2$. 
%\end{prop}
%\begin{proof}
%If $\kar(F)=2$ this is shown in \cite[(1.5)]{tignol:qfskewfield}. If $\kar(F)\neq 2$ then $\rho_1$ and $\rho_2$ are determined by their polar forms and the result follows from  \cite[p.1]{Knus:1998}. 
%\end{proof}

 \begin{prop}\label{prop:hypiffquad}
Let $\rho$ be a nonsingular quadratic form over $F$. Then $\rho$ is isotropic (resp.~hyperbolic) if and only if $\Ad(\rho)$ is  isotropic (resp.~hyperbolic).
\end{prop}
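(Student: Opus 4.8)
The plan is to work directly with the description of $\Ad(\rho)$ as $(\End_F(V),\ad_{b_q},f_q)$, where $\rho = (V,q)$ and $(V,b_q)$ is the polar form, using the explicit isomorphism $\Phi\colon V\otimes_F V\to\End_F(V)$ with $\Phi(u\otimes v)(w)=b_q(v,w)u$ and the defining property $f_q(\Phi(v\otimes v))=q(v)$. The two equivalences (isotropic, hyperbolic) are handled separately but by the same philosophy: translate the algebraic condition on idempotents or square-zero symmetric elements into a condition on subspaces of $V$, and match it against the quadratic-form notion.

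For the isotropic case, first suppose $\rho$ is isotropic, so $q(v)=0$ for some $v\in V\setminus\{0\}$. Set $s=\Phi(v\otimes v)\in\End_F(V)$. Then $s$ is $\sigma$-symmetric since $\sigma(v\otimes v)=v\otimes v$; moreover $s\ast s = \Phi\big((v\otimes v)\ast(v\otimes v)\big)=b_q(v,v)\,\Phi(v\otimes v)$, and since $\rho$ is nonsingular with $q(v)=0$ one still needs $b_q(v,v)=0$ — this is automatic in characteristic two and, in characteristic not two, follows from $q(v)=0$ via $b_q(v,v)=2q(v)$. Hence $s^2=0$, $s\neq 0$ (as $b_q$ is nondegenerate, $\Phi$ is injective), and $f_q(s)=q(v)=0$, so $\Ad(\rho)$ is isotropic. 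Conversely, if $\Ad(\rho)$ is isotropic there is $s\in\Sym(\End_F(V),\sigma)\setminus\{0\}$ with $s^2=0$, $f_q(s)=0$; I would identify $s=\Phi(t)$ with $t\in V\otimes_F V$ symmetric, pick $v\in\Im(s)\setminus\{0\}$, and argue that $q(v)=0$. The cleanest route is to observe that $\Im(s)$ with $s^2=0$ is a subspace contained in $\Ker(s)$, that $s$ symmetric forces $\Im(s)$ to be totally isotropic for $b_q$, and that $f_q$ vanishing on $s$ forces $q$ to vanish on a nonzero vector of $\Im(s)$; extracting the right vector uses that a symmetric rank-one-ish component of $s$ produces an honest square $\Phi(v\otimes v)$ on which $f_q$ reads off $q(v)$.

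For the hyperbolic case I would use the standard correspondence between hyperbolic idempotents and Lagrangians. If $\rho$ is hyperbolic, write $V=W\oplus W'$ with $W$ totally isotropic, $\dim W=\tfrac12\dim V$, and $b_q$ putting $W$ and $W'$ in perfect duality; let $e$ be the $\Phi$-image of a suitably normalized element of $W\otimes_F W'$, equivalently the endomorphism that is the $b_q$-orthogonal projection onto $W$. One checks $e^2=e$, $\sigma(e)=1-e$ because $W$ is $b_q$-self-orthogonal and $W'$ is its complement under the pairing, and $f_q(eA\cap\Sym)=\{0\}$ because $eA\cap\Sym(\End_F(V),\sigma)$ corresponds under $\Phi$ to $\Sym$-part of $W\otimes_F V$, whose square-type elements $\Phi(w\otimes w)$ with $w\in W$ satisfy $f_q(\Phi(w\otimes w))=q(w)=0$. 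Conversely, from a hyperbolic idempotent $e$ one sets $W=\Im(e)$; the relation $\sigma(e)=1-e$ forces $W$ to be totally isotropic of half dimension for $b_q$, and $f_q(eA\cap\Sym)=\{0\}$ then forces $q|_W=0$, so $\rho$ is hyperbolic of Witt index $\tfrac12\dim\rho$.

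The main obstacle is the converse directions, specifically the passage from an abstract symmetric square-zero element $s$ (resp.\ a hyperbolic idempotent $e$) back to a concrete totally isotropic subspace of $(V,q)$ on which $q$ genuinely vanishes: one must verify that the semi-trace condition $f_q(eA\cap\Sym(A,\sigma))=\{0\}$ (resp.\ $f_q(s)=0$) really does control $q$ and not merely $b_q$, and handle uniformly the characteristic-two case where $f_q$ carries information strictly beyond the polar form. I expect this to require a careful bookkeeping of which symmetric elements of $\End_F(V)$ are of the form $\Phi(v\otimes v)$ and an appeal to \cite[(5.11)]{Knus:1998} together with the uniqueness of $f_q$; alternatively, one can shortcut the whole proposition by invoking that $\Ad(\rho)\simeq\Ad(\rho')$ iff $\rho\sim\rho'$ together with the split-case classification, reducing isotropy/hyperbolicity of $\Ad(\rho)$ to that of $\rho$ via the description of isotropic and hyperbolic split algebras with quadratic pair in \cite[\S6]{Knus:1998}. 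I would present the direct subspace argument as the primary proof and remark on the reduction as an alternative.
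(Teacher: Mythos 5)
Your write-up is a direct computational proof, whereas the paper disposes of the statement entirely by citation: isotropy via \cite[(6.3) and (6.6)]{Knus:1998} and hyperbolicity via \cite[(6.13)]{Knus:1998}. So the ``shortcut'' you mention in your last paragraph is in fact the paper's actual proof, and your primary argument is the genuinely different route. The forward implications are fine ($s=\Phi(v\otimes v)$ for an isotropic vector $v$, and the projection onto a Lagrangian for the hyperbolic case), and your treatment of hyperbolicity in both directions is sound, essentially because the definition of a hyperbolic idempotent demands $f_q(eA\cap\Sym(A,\sigma))=\{0\}$, i.e.\ vanishing of the semi-trace on \emph{every} symmetric element of the ideal $eA$; this ideal-level condition really does force $q|_W=0$ for $W=\Im(e)$, since $eA\cap\Sym(A,\sigma)$ contains $\Phi(w\otimes w)$ for each $w\in W$.

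The gap is in the converse of the isotropy statement, exactly at the step you flag as the main obstacle, and it is not closable in the way you propose. In characteristic two the hypothesis $f_q(s)=0$ is a single linear condition that need not see the values of $q$ on $\Im(s)$ at all. Concretely, take $w_1,w_2\in V$ linearly independent with $b_q(w_1,w_2)=0$ and set $s=\Phi(w_1\otimes w_2+w_2\otimes w_1)$. Then $\sigma(s)=s$, $s\neq0$, $s^2=0$ (since $b_q(w_i,w_i)=2q(w_i)=0$ and $b_q(w_1,w_2)=0$), and writing $s=x+\sigma(x)$ with $x=\Phi(w_1\otimes w_2)$ gives $f_q(s)=\Trd_A(x)=b_q(w_2,w_1)=0$ --- irrespective of the values $q(w_1)$ and $q(w_2)$. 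So the assertion that ``$f_q$ vanishing on $s$ forces $q$ to vanish on a nonzero vector of $\Im(s)$'' is false as stated; more generally, for $s=\Phi(t)$ with $t=\sum_{i,j}c_{ij}\,w_i\otimes w_j$ symmetric and supported on a totally $b_q$-isotropic subspace, $f_q(s)=\sum_i c_{ii}q(w_i)$ is one linear relation among the values $q(w_i)$, which does not yield a $q$-isotropic vector (the relation would have to hold with square coefficients). What does control $q$ on $\Im(s)$ is the vanishing of $f_q$ on the whole of $sA\cap\Sym(A,\sigma)$, i.e.\ the ideal-theoretic isotropy condition of \cite[\S6.A]{Knus:1998}; that is precisely the content of the results (6.3) and (6.6) the paper invokes. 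To repair your argument you must either justify the passage from the single-element hypothesis to the ideal-level condition $f_q(sA\cap\Sym(A,\sigma))=\{0\}$ (note this implication is not formal, as the example above shows), or simply fall back on the citation, as the paper does. This is also the one place where the characteristic-two information carried by $f_q$ beyond the polar form genuinely bites, so your instinct to worry about it was correct.
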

\begin{proof} 
The statement on isotropy follows from \cite[(6.3) and (6.6)]{Knus:1998}. See  \cite[(6.13)]{Knus:1998} for the statement on hyperbolicity.
\end{proof}

 For any field extension $K/F$ we will use the notation  $(A,\sigma,f)_K$ for the $K$-algebra with quadratic pair $(A_K,\sigma_K,f_K)$ where
$f_K:\mathrm{Sym}(A_K,\s_K)\lra K$ is the canonical extension of $f$ to a 
  $K$-linear map.

\section{Tensor products of involutions and  quadratic pairs}\label{tenprodqps}

In this section we consider the tensor product of an algebra with involution with an algebra with quadratic pair. This corresponds  to notion of the tensor product of a symmetric bilinear form and a quadratic form.
We show that this tensor product is  associative with the tensor product of algebras with involution. This property underlies our definition of  a totally decomposable quadratic pair in the following section.

\begin{prop}\label{lemma:explict}
Let $(A,\s,f)$ be an $F$-algebra with quadratic pair  and $(B,\tau)$ an $F$-algebra with involution.
Then there is a unique semi-trace $g$ on $(B,\tau)\otimes (A,\s)$ such that $g(s_1\otimes s_2) =  \Trd_B(s_1)\cdot f(s_2)$
for all  $s_1\in \Sym(B,\tau)$ and  $s_2\in\Sym(A,\s)$.
Moreover, if the semi-trace $f$ on $(A,\s)$ is given by $\ell$, then 
 $g$ is the semi-trace on $(B,\tau)\otimes(A,\s)$ given by $1\otimes \ell$.
\end{prop}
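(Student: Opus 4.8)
The plan is to reduce everything to the element description of semi-traces: by \cite[(5.7)]{Knus:1998} (as recalled in the excerpt), a semi-trace is determined by any element $\ell$ with $\ell+\sigma(\ell)=1$, and two such elements give the same semi-trace exactly when their difference lies in $\Alt$. So the first step is to check that $1\otimes\ell$ is a legitimate choice for $(B,\tau)\otimes(A,\s)$, i.e.\ that $(1\otimes\ell)+(\tau\otimes\s)(1\otimes\ell)=1\otimes(\ell+\s(\ell))=1\otimes 1$, which is the identity of $B\otimes_F A$. This already produces \emph{a} semi-trace $g$ on $(B,\tau)\otimes(A,\s)$, namely $x\mapsto \Trd_{B\otimes_F A}\bigl((1\otimes\ell)x\bigr)$, and since $\Trd_{B\otimes_F A}=\Trd_B\otimes\Trd_A$ under the identification $B\otimes_F A$, the second ("moreover") assertion will follow immediately once we verify the displayed formula for this particular $g$.

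Next I would verify the formula $g(s_1\otimes s_2)=\Trd_B(s_1)\cdot f(s_2)$ for $s_1\in\Sym(B,\tau)$, $s_2\in\Sym(A,\s)$. With $g$ given by $1\otimes\ell$ we have
\[
g(s_1\otimes s_2)=\Trd_{B\otimes_F A}\bigl((1\otimes\ell)(s_1\otimes s_2)\bigr)=\Trd_{B\otimes_F A}(s_1\otimes \ell s_2)=\Trd_B(s_1)\cdot\Trd_A(\ell s_2)=\Trd_B(s_1)\cdot f(s_2),
\]
using multiplicativity of the reduced trace on a tensor product of central simple algebras and the fact that $f$ is the semi-trace on $(A,\s)$ given by $\ell$. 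This settles existence together with the explicit description.

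For uniqueness, suppose $g'$ is another semi-trace on $(B,\tau)\otimes(A,\s)$ satisfying the same formula on elements of the form $s_1\otimes s_2$. Both $g$ and $g'$ are $F$-linear maps on $\Sym\bigl((B,\tau)\otimes(A,\s)\bigr)$, so it suffices to show that this space is spanned by elements of the shape $s_1\otimes s_2$ with $s_1\in\Sym(B,\tau)$, $s_2\in\Sym(A,\s)$; then $g-g'$ vanishes on a spanning set and hence everywhere. This is the step I expect to be the main obstacle, because $\Sym(B\otimes_F A,\tau\otimes\s)$ is strictly larger than $\Sym(B,\tau)\otimes\Sym(A,\s)$ in general (it also contains $\Skew(B,\tau)\otimes\Skew(A,\s)$). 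The way around it is to observe that a semi-trace is already determined by its values on $\{x+\sigma(x)\mid x\in B\otimes_F A\}$ together with $F$-linearity on $\Alt$: indeed for $x\in B\otimes_F A$ one has $g(x+(\tau\otimes\s)(x))=\Trd_{B\otimes_F A}(x)$, which is forced, and the elements $x+(\tau\otimes\s)(x)$ span $\Symd(B\otimes_F A,\tau\otimes\s)$, while any two semi-traces agree up to a linear functional vanishing on $\Symd$, i.e.\ determined by their restriction to $\Alt$. Now $B\otimes_F A$ is spanned by pure tensors $b\otimes a$, and $(b\otimes a)+(\tau\otimes\s)(b\otimes a)$ can be expanded, after polarizing $b=b_1+\tau(b_1)'$-style decompositions, into a combination of terms $s_1\otimes s_2$ (symmetric $\otimes$ symmetric) and $k_1\otimes k_2$ (skew $\otimes$ skew); the constraint $g(x+\sigma(x))=\Trd(x)$ pins down $g$ on the first type via the given formula and on the second type via $\Trd_B(k_1)\Trd_A(k_2)$ — but on $\Skew\otimes\Skew$ one checks $k_1\otimes k_2 = \tfrac12\bigl((k_1\otimes k_2)+\sigma(k_1\otimes k_2)\bigr)$ is symmetric and lands in the span of the $s_1\otimes s_2$ modulo $\Alt$, so no new freedom appears. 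Thus both $g$ and $g'$ are forced to coincide, proving uniqueness. (In characteristic $2$ the $\tfrac12$ is unavailable; there one instead uses directly that $\ell$ is determined modulo $\Alt(B\otimes_F A,\tau\otimes\s)$ and that $1\otimes\ell$ versus any competing $\ell'$ representing $g'$ would force $\ell'-1\otimes\ell\in\Alt$, giving $g=g'$.)
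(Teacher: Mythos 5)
Your existence argument and the verification that the semi-trace given by $1\otimes\ell$ satisfies $g(s_1\otimes s_2)=\Trd_B(s_1)\cdot f(s_2)$ are correct and coincide with what the paper does (the paper additionally dismisses $\kar(F)\neq 2$ at the outset, since there the semi-trace on any algebra with involution is unique anyway). The genuine gap is in the uniqueness argument, which the paper does not prove from scratch but delegates to \cite[(5.18)]{Knus:1998}.

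You correctly identify the obstacle: $\Sym(B\otimes_F A,\tau\otimes\s)$ is strictly larger than the span of the tensors $s_1\otimes s_2$ with $s_1\in\Sym(B,\tau)$ and $s_2\in\Sym(A,\s)$ (already for two quaternion algebras in characteristic $2$ the dimensions are $10$ versus $9$). What is actually needed is the identity
$$\Sym(B\otimes_F A,\tau\otimes\s)\;=\;\Sym(B,\tau)\otimes\Sym(A,\s)\;+\;\Alt(B\otimes_F A,\tau\otimes\s)\,,$$
which is precisely the content of \cite[(5.18)]{Knus:1998}: in characteristic $2$ one has $\Alt=\Symd$, so any semi-trace is forced on $\Alt$ by the defining condition $g(x+(\tau\otimes\s)(x))=\Trd_{B\otimes_F A}(x)$, it is prescribed on $\Sym(B,\tau)\otimes\Sym(A,\s)$ by the displayed formula, and the identity then gives uniqueness by linearity. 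Your proposed workaround does not establish this identity: the polarization step uses $\tfrac12$, which is unavailable in characteristic $2$ --- the only case in which there is anything to prove --- and your characteristic-$2$ fallback (that $\ell'-1\otimes\ell$ ``would be forced'' to lie in $\Alt$) is circular, since the only way to force that containment is to already know that agreement on the pure symmetric tensors implies agreement of the semi-traces, which is exactly the statement under proof. To close the gap, either cite \cite[(5.18)]{Knus:1998} as the paper does, or supply an independent proof of the displayed spanning identity.
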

\begin{proof}  If $\kar(F)\neq 2$ then this result is trivial. Assume that $\kar(F)=2$. 
Note   that as $1\otimes \ell  + (\tau\otimes\s)(1\otimes \ell) = 1\otimes 1$, the element $1\otimes  \ell$ gives a semi-trace on $(B\otimes_F A,\tau\otimes \s)$ by  \cite[(5.7)]{Knus:1998}. 
For all $s_1\in\Sym(B,\tau)$ and $s_2\in\Sym(A,\s)$ we have that 
$$\Trd_{B\otimes_F A}((1\otimes\ell)(s_1\otimes s_2))= \Trd_B(s_1)\cdot \Trd_A(\ell \cdot s_2)=\Trd_B(s_1)\cdot f(s_2)\,.$$
For the uniqueness statement, see \cite[(5.18)]{Knus:1998}.
\end{proof}

Let $(A,\s,f)$ be an $F$-algebra with quadratic pair and let $(B,\tau)$ be an $F$-algebra with involution, which is  assumed to be  orthogonal if $\kar(F)\neq 2$.  Then by \cite[(2.23)]{Knus:1998}, $(B,\tau)\otimes (A,\s)$  is orthogonal if $\kar(F)\neq 2$ and symplectic if $\kar(F)=2$.
We denote by $(B,\tau)\otimes (A,\s,f)$ the $F$-algebra with quadratic pair $( B\otimes_F A, \tau\otimes \s,g)$, where $g$ is the semi-trace  $g$ on $(B,\tau)\otimes (A,\s)$ characterised in \cref{lemma:explict}.

\begin{prop}\label{prop:tensor} Let $\varphi$ be a symmetric bilinear form over $F$ and $\rho$  a nonsingular quadratic form over~$F$. Then $\Ad(\varphi\otimes \rho)\simeq \Ad(\varphi)\otimes\Ad(\rho)$.
\end{prop}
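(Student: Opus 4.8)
The plan is to mimic the proof of \cref{lemma:tensorbf}, carrying the semi-trace along explicitly. Write $\varphi=(W,b)$ and $\rho=(V,q)$ with polar form $b_q$, so that $\varphi\otimes\rho=(W\otimes_F V, b\otimes b_q)$ as a bilinear form and $(W\otimes_F V, (b\otimes b_q)_{q})$ — wait, more precisely the tensor product of a bilinear form and a quadratic form is the quadratic form $\varphi\otimes\rho$ on $W\otimes_F V$ whose underlying quadratic map sends $w\otimes v$ to $b(w,w)q(v)$ and whose polar form is $b\otimes b_q$ (see \cite[p.51]{Elman:2008}). This is nonsingular because $b$ and $b_q$ are nondegenerate. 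First I would recall from \cref{lemma:tensorbf} that the natural $F$-algebra isomorphism $\Phi\colon\End_F(W)\otimes_F\End_F(V)\to\End_F(W\otimes_F V)$ identifies $\Ad(\varphi)\otimes\Ad(\rho)$ with $(\End_F(W\otimes_F V),\ad_{b\otimes b_q})$ as $F$-algebras with involution; so the underlying algebras with involution of both sides of the claimed isomorphism already agree via $\Phi$. It remains only to check that $\Phi$ respects the semi-traces, i.e.\ that $f_{\varphi\otimes\rho}=g\circ\Phi$, where $g$ is the semi-trace on $\Ad(\varphi)\otimes\Ad(\rho)$ supplied by \cref{lemma:explict}.

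The key step is a computation on the canonical generators of the symmetric subspace. Recall that under the isomorphism of $F$-algebras with involution $\Ad(V,b_q)\simeq(V\otimes_F V,\s_V)$ of \cite[(5.1)]{Knus:1998}, the semi-trace $f_q$ is characterised by $f_q(v\otimes v)=q(v)$ for $v\in V$; similarly for $\varphi$ and more importantly for $\varphi\otimes\rho$ we have $f_{\varphi\otimes\rho}((w\otimes v)\otimes(w\otimes v))=(b\otimes b_q)_q(w\otimes v)=b(w,w)\,q(v)$. On the other hand, under the identification $\End_F(W)\otimes_F\End_F(V)\simeq(W\otimes_F W)\otimes_F(V\otimes_F V)$, the element $\Phi^{-1}$ of (the rank-one symmetric endomorphism attached to) $(w\otimes v)\otimes(w\otimes v)$ is $(w\otimes w)\otimes(v\otimes v)$, which lies in $\Sym(\varphi)\otimes\Sym(\rho)$; evaluating $g$ on it using the formula in \cref{lemma:explict} gives $g((w\otimes w)\otimes(v\otimes v))=\Trd(\text{rank-one sym. endo. of }w\otimes w)\cdot f_q(v\otimes v)=b(w,w)\cdot q(v)$, since the reduced trace of the rank-one operator $x\mapsto b(w,x)w$ is $b(w,w)$. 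So $f_{\varphi\otimes\rho}$ and $g\circ\Phi$ agree on all elements of the form $(w\otimes v)\otimes(w\otimes v)$.

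Finally I would invoke the uniqueness clause: by \cite[(5.11)]{Knus:1998}, or more directly by the uniqueness part of \cref{lemma:explict}, a semi-trace on $\Ad(\varphi)\otimes\Ad(\rho)$ is determined by its values on the ``squares'' $\Phi((w\otimes v)\otimes(w\otimes v))$ (equivalently, the associated quadratic form is determined by its values on vectors $w\otimes v$, and these span $W\otimes_F V$), so $f_{\varphi\otimes\rho}=g\circ\Phi$, and hence $\Phi$ is an isomorphism of $F$-algebras with quadratic pair $\Ad(\varphi\otimes\rho)\to\Ad(\varphi)\otimes\Ad(\rho)$.

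The main obstacle I anticipate is purely bookkeeping: keeping straight the three layers of identifications — the algebra isomorphism $\Phi$ of \cref{lemma:tensorbf}, the Knus--Merkurjev--Rost--Tignol identification $\Ad(V,b_q)\simeq V\otimes_F V$, and the formula for $g$ in \cref{lemma:explict} — and matching them consistently on rank-one generators, together with the (routine but characteristic-sensitive) verification that the reduced trace of the rank-one symmetric operator associated to $w$ is indeed $b(w,w)$, so that no factor of $2$ sneaks in. There is no real mathematical difficulty beyond \cref{lemma:tensorbf} and \cref{lemma:explict}; the content is that the semi-trace on a tensor product is the ``product'' semi-trace, which is exactly what \cref{lemma:explict} was set up to say.
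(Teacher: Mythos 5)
Your proof is correct. The paper itself gives no argument for this proposition --- it simply cites \cite[(5.19)]{Knus:1998} --- so your write-up is in effect supplying the proof that the paper outsources, and it is the natural one: \cref{lemma:tensorbf} handles the underlying algebras with involution (the polar form of $\varphi\otimes\rho$ is indeed $b\otimes b_q$), and \cref{lemma:explict} pins down the semi-trace on the right-hand side, so the only content is matching the two semi-traces on the rank-one symmetric elements $\Phi\bigl((w\otimes v)\otimes(w\otimes v)\bigr)$, where both evaluate to $b(w,w)\,q(v)$. The one step you state a little loosely is the final uniqueness claim: a quadratic form is not determined by its values on a spanning set alone, but here the two candidate semi-traces are both semi-traces on the same algebra with involution, so their difference vanishes on $\Symd=\Alt$; since $\Sym$ is spanned by the elements $x\otimes x$ and $(x+y)\otimes(x+y)=x\otimes x+y\otimes y+\bigl(x\otimes y+\sigma(x\otimes y)\bigr)$, agreement on pure tensors $x=w\otimes v$ propagates to all $x\in W\otimes_F V$ and hence to all of $\Sym$. (Equivalently, invoke the uniqueness clause of \cite[(5.11)]{Knus:1998} after this polarization step.) With that small point made explicit, the argument is complete, and your trace computation for the rank-one operator $x\mapsto b(w,x)w$ is characteristic-free as required.
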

\begin{proof} See \cite[(5.19)]{Knus:1998}.
\end{proof}
%%%%
%\begin{prop}\label{lemma:metahyp}
%Let $(A,\sigma,f)$ be an $F$--algebra with quadratic pair and  $(C,\tau)$  an $F$--algebra with involution. Assume that $(C,\tau)$ is orthogonal if $\kar(F)\neq 2$. If $(C,\tau)$ is metabolic, then $(C,\tau)\otimes (A,\sigma,f)$ is hyperbolic.
%\end{prop}
%\begin{proof} 
%See \cite[(A.5)]{Tignol:galcohomgps}.
%\end{proof}

%\begin{lemma}\label{lemma:explict}
%Let  $(B,\tau)$ be an $F$-algebra with involution  assumed orthogonal if $\kar(F)\neq 2$ and let 
%$(A,\s,f)$ be an $F$-algebra with quadratic pair  where $f=\str(A,{\sigma, \ell})$ for some $\ell\in A$ such that $\ell+\s(\ell)=1$. Then 
%$$(B,\tau)\otimes (A,\s,f)= (B\otimes_F A,\tau\otimes \s ,g)\textrm{ where } g= \str(1\otimes\ell)\,.$$
%\end{lemma}
%\begin{proof}\end{proof}

\begin{prop}\label{lemma:decomp}
 Let  $(B,\tau)$ and $(C,\gamma)$ be  $F$-algebras with involution  that are  assumed to be orthogonal if $\kar(F)\neq 2$ and let $(A,\s,f)$ be an $F$-algebra with quadratic pair. Then
$$ ((B,\tau)\otimes (C,\gamma))\otimes (A,\s,f) \simeq(B,\tau)\otimes ((C,\gamma)\otimes (A,\s,f))\,. $$
 \end{prop}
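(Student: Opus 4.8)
The plan is to verify the claimed isomorphism of $F$-algebras with quadratic pair by checking it on the two ingredients separately: first on the underlying $F$-algebras with involution, then on the semi-traces. On the level of $F$-algebras with involution, the associativity of the tensor product of algebras with involution is already known (it is immediate from the associativity of the tensor product of algebras together with the fact that $(\tau\otimes\gamma)\otimes\sigma$ and $\tau\otimes(\gamma\otimes\sigma)$ agree under the canonical identification $(B\otimes_F C)\otimes_F A\cong B\otimes_F(C\otimes_F A)$). So the only genuine content is that the two semi-traces coincide under this identification, and for that I would use the explicit description of the semi-trace from \cref{lemma:explict}.

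Concretely, choose an element $\ell\in A$ with $\ell+\sigma(\ell)=1$ that gives the semi-trace $f$ on $(A,\sigma)$; such an $\ell$ exists by \cite[(5.7)]{Knus:1998}. By \cref{lemma:explict}, the semi-trace on $(C,\gamma)\otimes(A,\sigma)$ is given by $1_C\otimes\ell$, and then the semi-trace on $(B,\tau)\otimes((C,\gamma)\otimes(A,\sigma,f))$ is given by $1_B\otimes(1_C\otimes\ell)$. Applying \cref{lemma:explict} the other way, the semi-trace on $(B,\tau)\otimes(C,\gamma)$ underlying the algebra with involution $(B,\tau)\otimes(C,\gamma)$ is not needed; rather, $(C,\gamma)\otimes(A,\sigma,f)$ has semi-trace given by $1_C\otimes\ell$, and then tensoring on the left by $(B,\tau)$ gives semi-trace given by $1_B\otimes(1_C\otimes\ell)$. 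On the other side, $((B,\tau)\otimes(C,\gamma))\otimes(A,\sigma,f)$ has semi-trace given by $1_{B\otimes_F C}\otimes\ell$. Under the canonical isomorphism $(B\otimes_F C)\otimes_F A\cong B\otimes_F(C\otimes_F A)$ the elements $1_{B\otimes_F C}\otimes\ell=(1_B\otimes 1_C)\otimes\ell$ and $1_B\otimes(1_C\otimes\ell)$ correspond to each other, so by \cref{lemma:explict} (which says the semi-trace is determined by, and may be specified by, any such $\ell$), the two semi-traces agree. Hence the canonical algebra isomorphism is an isomorphism of $F$-algebras with quadratic pair.

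One should also remark, following the pattern in the excerpt, that in characteristic different from $2$ the statement is trivial, since quadratic pairs are then equivalent to orthogonal involutions and the associativity is the known associativity of tensor products of algebras with involution; so the argument above is really only needed when $\kar(F)=2$, where \cref{lemma:explict} applies as stated. The main (mild) obstacle is purely bookkeeping: being careful that the chain of identifications $(B\otimes_F C)\otimes_F A\cong B\otimes_F(C\otimes_F A)$ is compatible with all three involutions simultaneously and carries the distinguished element $\ell$ (placed in the $A$-slot) to the corresponding distinguished element, so that \cref{lemma:explict} can be invoked on both sides with the same $\ell$. There is no real analytic or algebraic difficulty beyond this.
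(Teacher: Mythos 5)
Your proposal is correct and follows essentially the same route as the paper: both reduce the statement to checking that the natural algebra isomorphism $(B\otimes_FC)\otimes_F A\to B\otimes_F(C\otimes_FA)$ respects the involutions and carries the semi-trace given by $(1\otimes 1)\otimes\ell$ to the one given by $1\otimes(1\otimes\ell)$, using \cref{lemma:explict} and \cite[(5.7)]{Knus:1998} exactly as the paper does.
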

 \begin{proof} 
 Let $\Phi:(B\otimes_FC)\otimes_F A\rightarrow B\otimes_F(C\otimes_FA)$ be the natural  $F$-algebra  isomorphism. 
 Clearly $\Phi$ is compatible  with the involutions in the statement. 
 %It remains to show that $\Phi$ is compatible with the semi-traces.
 By \cite[(5.7)]{Knus:1998}, $f$ is given by some $\ell\in A$ with $\ell+\s(\ell)=1$. 
 It follows from \cref{lemma:explict} that the semi-trace associated with
  $((B,\tau)\otimes (C,\gamma))\otimes (A,\s,f) $ is given by $(1\otimes 1)\otimes\ell$ and the semi-trace associated with $(B,\tau)\otimes ((C,\gamma)\otimes (A,\s,f))$ is given by 
 $1\otimes (1\otimes\ell)$.  It then easily follows that $\Phi$ is an isomorphism between the $F$-algebras with quadratic pair.
 \end{proof}

 By \cref{lemma:decomp}, the tensor product of two algebras with involution on the one hand, and the  tensor product of an algebra with involution with an algebra with quadratic pair on the other hand, are mutually associative. That is, for $F$-algebras with involution $(A,\s)$ and $(B,\tau)$ and an $F$-algebra with quadratic pair $(C,\gamma,f)$, the expression  $(A,\s)\otimes(B,\tau)\otimes(C,\gamma,f)$ is unambiguous.

\begin{prop}\label{choicef} 
Assume $\kar(F)=2$.
Let $(B,\tau)$ and  $(C,\s)$ be  $F$-algebras with symplectic involution. Then  there exists a unique semi-trace $h$ on $(B,\tau)\otimes(C,\s)$ such that $h(s_1\otimes s_2)=0$ for all $s_1\in \Sym(B,\tau)$ and $s_2\in \Sym(C,\s)$. Moreover, for any  semi-trace $f$  on $(C,\s)$, $h$ is the
 semi-trace associated with the $F$-algebra with quadratic pair $(B,\tau)\otimes(C,\s, f)$.
\end{prop}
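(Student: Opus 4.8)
The plan is to construct the semi-trace $h$ directly using the description of semi-traces via elements $\ell$ with $\ell+\tau(\ell)=1$, and then verify the two claimed properties. First I would recall that since $\kar(F)=2$ and both $(B,\tau)$ and $(C,\s)$ are symplectic, we have $\Trd_B(\Sym(B,\tau))=\{0\}$ and $\Trd_C(\Sym(C,\s))=\{0\}$ by \cite[(2.6)]{Knus:1998}. By \cite[(5.7)]{Knus:1998} there exist $\ell_B\in B$ and $\ell_C\in C$ with $\ell_B+\tau(\ell_B)=1$ and $\ell_C+\s(\ell_C)=1$; these give semi-traces on $(B,\tau)$ and $(C,\s)$ respectively. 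The natural candidate for the element defining $h$ is $\ell:=\ell_B\otimes\ell_C\in B\otimes_F C$. I would check that $\ell+(\tau\otimes\s)(\ell)=\ell_B\otimes\ell_C+\tau(\ell_B)\otimes\s(\ell_C)$; expanding $\tau(\ell_B)=1-\ell_B$ and $\s(\ell_C)=1-\ell_C$ (as $\kar(F)=2$, signs are irrelevant) gives $1\otimes 1$, using the bilinearity of the tensor product. Hence $\ell$ defines a semi-trace $h$ on $(B,\tau)\otimes(C,\s)$ by \cite[(5.7)]{Knus:1998}.

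Next I would verify the vanishing property: for $s_1\in\Sym(B,\tau)$ and $s_2\in\Sym(C,\s)$,
\[
h(s_1\otimes s_2)=\Trd_{B\otimes_F C}\bigl((\ell_B\otimes\ell_C)(s_1\otimes s_2)\bigr)=\Trd_B(\ell_B s_1)\cdot\Trd_C(\ell_C s_2)\,,
\]
using the multiplicativity of reduced trace on a tensor product of central simple algebras. Now $\Trd_B(\ell_B s_1)=f_B(s_1)$ where $f_B$ is the semi-trace given by $\ell_B$; but any semi-trace on a symplectic algebra in characteristic two takes values governed by $\Trd_B|_{\Sym(B,\tau)}=0$ — more precisely, since $s_1\in\Sym(B,\tau)$ we may write $s_1=\frac{1}{\cdots}$... rather, the cleanest route is: $\Trd_B(\ell_B s_1)+\Trd_B(\tau(\ell_B s_1))=\Trd_B(\ell_B s_1+\tau(s_1)\tau(\ell_B))=\Trd_B(\ell_B s_1+s_1(1-\ell_B))=\Trd_B(s_1)=0$, and since $\Trd_B(\tau(x))=\Trd_B(x)$ always, we get $2\Trd_B(\ell_B s_1)=0$; this alone does not force vanishing in characteristic two, so instead I would argue via the product: it suffices that at least one of the two factors $\Trd_B(\ell_B s_1)$, $\Trd_C(\ell_C s_2)$ vanishes. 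Actually the correct observation is that the product vanishes because $h(s_1\otimes s_2)$ must equal the value prescribed by \cref{lemma:explict} applied to $(B,\tau)\otimes(C,\s,f)$ for the semi-trace $f$ given by $\ell_C$, namely $\Trd_B(s_1)\cdot f(s_2)=0\cdot f(s_2)=0$, since $\Trd_B(s_1)=0$ as $(B,\tau)$ is symplectic. This simultaneously proves the vanishing property and the "moreover" clause, because \cref{lemma:explict} tells us that the semi-trace associated with $(B,\tau)\otimes(C,\s,f)$ is exactly the one given by $1\otimes\ell_C$, and I would show $1\otimes\ell_C$ and $\ell_B\otimes\ell_C$ differ by an element of $\Alt(B\otimes_F C,\tau\otimes\s)$, hence give the same semi-trace by \cite[(5.7)]{Knus:1998}: indeed $1\otimes\ell_C-\ell_B\otimes\ell_C=(1-\ell_B)\otimes\ell_C=\tau(\ell_B)\otimes\ell_C$, and one checks $x\otimes y+\tau\otimes\s(x\otimes y)$-type manipulation, or more directly that $\Trd_B(\cdot)=0$ on symmetric elements forces the associated semi-traces to agree on all of $\Sym(B\otimes_F C,\tau\otimes\s)$.

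For uniqueness: any two semi-traces on the same algebra with involution are given by elements $\ell,\ell'$ with $\ell-\ell'\in\Alt$, and they agree iff this holds; alternatively, the difference of two semi-traces is an $F$-linear functional on $\Sym(B\otimes_F C,\tau\otimes\s)$ vanishing on $\Alt$-coset differences — the standard argument is that $\Sym(B\otimes_F C,\tau\otimes\s)$ is spanned by elements of the form $s_1\otimes s_2$ with $s_i$ symmetric together with elements $x\otimes y+\tau(x)\otimes\s(y)$-type "trace" elements on which every semi-trace is forced to agree by the semi-trace condition $f(x+\s(x))=\Trd(x)$; hence prescribing the values on $s_1\otimes s_2$ determines $h$ uniquely. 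I would invoke \cref{lemma:explict} (and its reference \cite[(5.18)]{Knus:1998}) for this spanning/uniqueness fact rather than reprove it. \textbf{The main obstacle} is the bookkeeping around uniqueness and the spanning of $\Sym(B\otimes_F C,\tau\otimes\s)$: one must be careful that $\Sym$ of a tensor product is not simply the span of $\Sym\otimes\Sym$, so the uniqueness claim genuinely requires the semi-trace normalization condition to pin down the remaining directions — this is precisely what \cref{lemma:explict} packages, so the real work is just citing it correctly and checking the characteristic-two trace vanishing that makes the prescribed values consistent.
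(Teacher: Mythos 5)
Your proposal starts from a construction that does not work: the element $\ell_B\otimes\ell_C$ does \emph{not} satisfy the normalization $\ell+(\tau\otimes\s)(\ell)=1\otimes 1$. Indeed, in characteristic two
\[
\ell_B\otimes\ell_C+(1+\ell_B)\otimes(1+\ell_C)=1\otimes 1+\ell_B\otimes 1+1\otimes\ell_C\,,
\]
and the cross terms $\ell_B\otimes 1+1\otimes\ell_C$ do not cancel, so $\ell_B\otimes\ell_C$ does not define a semi-trace at all. The same error propagates to your repair attempt: $1\otimes\ell_C-\ell_B\otimes\ell_C$ cannot lie in $\Alt(B\otimes_FC,\tau\otimes\s)$, since if $\ell-\ell'\in\Alt$ and $\ell$ satisfies the normalization then so does $\ell'$, which we have just seen is false here. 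You also correctly notice along the way that the factorization $\Trd_B(\ell_Bs_1)\cdot\Trd_C(\ell_Cs_2)$ would not visibly vanish (each factor is a semi-trace value, which is generally nonzero on symmetric elements), so this route is a dead end and should simply be deleted.

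The part of your argument that survives is exactly the paper's proof: take any semi-trace $f$ on $(C,\s)$, given by $\ell\in C$ with $\ell+\s(\ell)=1$; by \cref{lemma:explict} the semi-trace $g$ of $(B,\tau)\otimes(C,\s,f)$ is given by $1\otimes\ell$ (note $1\otimes\ell+(\tau\otimes\s)(1\otimes\ell)=1\otimes 1$, in contrast to your candidate) and satisfies $g(s_1\otimes s_2)=\Trd_B(s_1)\cdot f(s_2)=0$, because $\Trd_B(\Sym(B,\tau))=\{0\}$ for a symplectic involution in characteristic two by \cite[(2.6)]{Knus:1998}. The paper quotes \cite[(5.20)]{Knus:1998} for the existence and uniqueness of $h$ and then identifies $g$ with $h$; your alternative of extracting uniqueness from the uniqueness clause of \cref{lemma:explict} (i.e.\ \cite[(5.18)]{Knus:1998}) is legitimate, since the two characterizing conditions $h(s_1\otimes s_2)=0$ and $g(s_1\otimes s_2)=\Trd_B(s_1)f(s_2)$ coincide once $\Trd_B$ vanishes on $\Sym(B,\tau)$. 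Your closing worry about spanning $\Sym(B\otimes_FC,\tau\otimes\s)$ is indeed the content of the cited uniqueness results and need not be reproved.
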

\begin{proof} For the existence and uniqueness of the semi-trace $h$, see \cite[(5.20)]{Knus:1998}.
Let $f$ be any semi-trace on $(C,\s)$.
 By  \cite[(5.7)]{Knus:1998}, $f$ is given by an element  $\ell\in C$ with $\ell+\s(\ell)=1$. 
 Let $g$ be the  semi-trace associated with  $(B,\tau)\otimes(C,\s,f)$. Then by 
  \cref{lemma:explict},  $g$ is
given by $1\otimes\ell$.
By \cite[(2.6)]{Knus:1998}, as $(B,\tau)$ is symplectic we have  $\Trd_B(\Sym(B,\tau))=\{0\}$. Hence for all $s_1\in \Sym(B,\tau)$ and $s_2\in \Sym(C,\s)$ we have  
$$ \Trd_{B\otimes_F C}((1\otimes \ell)(s_1\otimes s_2))= \Trd_B(s_1)\cdot \Trd_C(\ell\cdot s_2)=0\,.$$
That is, $g$ satisfies the characterising property of $h$ in the statement. Therefore by the uniqueness of the semi-trace $h$, we have that $g=h$. 
\end{proof}

Given two $F$-algebras with symplectic involution $(B,\tau)$ and $(C,\s)$, we may  define a semi-trace $h$ on $(B,\tau)\otimes (C,\s)$ in the following manner. If $\kar(F)\neq2$, then  $(B,\tau)\otimes(C,\s)$ is orthogonal by \cite[(2.23)]{Knus:1998} and we 
  let $h=\frac{1}{2}\Trd_{B\otimes_F C}$. If $\kar(F)=2$, let $h$ be the semi-trace on $(B,\tau)\otimes (C,\s)$ characterised in \cref{choicef}. We denote the $F$-algebra with quadratic pair $(B\otimes_FC,\tau\otimes\s,h)$ by $(B,\tau)\boxtimes (C,\s)$.
 If $\kar(F)=2$, then by \cref{choicef} we have that $(B,\tau)\boxtimes (C,\s)\simeq  (B,\tau)\otimes (C,\s,f)$ for any choice of semi-trace $f$ on $(C,\s)$. In particular, by Proposition~\ref{lemma:decomp}, for an $F$-algebra with symplectic involution $(A,\sigma)$, the expression $(A,\sigma)\otimes(B,\tau)\boxtimes (C,\gamma)$ is unambiguous. 
 
  Hence given a tensor product  of two $F$-algebras with symplectic involution, 
 there is natural choice of a semi-trace making this product into a quadratic pair.  We now consider this quadratic pair in the case where the $F$-algebras with involution are $F$-quaternion algebras with their canonical involutions.
Let $a\in F$ with $-4a\neq 1$ and $b\in F^\times$ and let  $(Q,\gamma) =[a\qil b)$. Recall that this $F$-algebra with involution is orthogonal if $\kar(F)\neq2$ and symplectic if $\kar(F)=2$. 
Let $u\in Q$ be such that $u^2=u+a$ and $\gamma(u)=1-u$ and let $f$ be the semi-trace on $(Q,\gamma)$ given  by $u$. Then we denote the $F$-algebra with quadratic pair $(Q,\gamma,f)$  by $\qp{a}{b}$.

\begin{prop}\label{symplectize} 
 Let $a,c\in F$ such that $4a\neq -1\neq 4c$ and $b,d\in F^\times$.
Then $${[a\qilr b)}\boxtimes[{c}\qilr{d})\simeq {[a+c+4ac \qir b)}\otimes \qp{c}{bd}\,.$$
\end{prop}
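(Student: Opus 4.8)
The plan is to realize the right-hand algebra with quadratic pair explicitly as a tensor decomposition of $Q\otimes_F Q'$, where $(Q,\gamma)=[a\qilr b)$ and $(Q',\gamma')=[c\qilr d)$. Fix quaternion bases $(1,u,v,uv)$ of $Q$ and $(1,u',v',u'v')$ of $Q'$ as in the text (so $u^2=u+a$, $v^2=b$, $uv=v(1-u)$, and similarly for the primed symbols), and regard $u,v,u',v'$ as elements of $A:=Q\otimes_F Q'$, the two quaternion subalgebras commuting with one another. Set $\tilde u:=1-u-u'+2uu'\in A$. A direct computation using only $u^2=u+a$, $u'^2=u'+c$ and $uu'=u'u$ yields $\tilde u^2=\tilde u+(a+c+4ac)$; since $1+4(a+c+4ac)=(1+4a)(1+4c)\neq 0$, the algebra $[a+c+4ac,b)$ is a genuine $F$-quaternion algebra. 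Using in addition $uv=v(1-u)$ one checks $\tilde uv=v(1-\tilde u)$, so there is an $F$-algebra homomorphism $[a+c+4ac,b)\to A$ carrying the standard generators to $\tilde u$ and $v$; denote its image $Q_1$. Similarly, from $u'^2=u'+c$, $(vv')^2=bd$ and $u'(vv')=(vv')(1-u')$ (the last from $u'v'=v'(1-u')$) one obtains an $F$-algebra homomorphism $[c,bd)\to A$ with image $Q_2$, sending the standard generators to $u'$ and $vv'$. A short further computation shows that $Q_1$ and $Q_2$ centralise one another — the one nonobvious point being that $\tilde u$ commutes with $vv'$, which follows from $\tilde uv=v(1-\tilde u)$ together with the analogous $\tilde uv'=v'(1-\tilde u)$. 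Hence the two assignments combine to an $F$-algebra homomorphism $\Psi:[a+c+4ac,b)\otimes_F[c,bd)\to A$; it is nonzero, hence injective as the source is simple, hence an isomorphism by comparing dimensions ($16=\dim_F A$).

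Next, match the involutions. Using $\gamma(u)=1-u$, $\gamma(v)=-v$ and the corresponding formulas for $\gamma'$, one computes $(\gamma\otimes\gamma')(\tilde u)=\tilde u$, $(\gamma\otimes\gamma')(v)=-v$, $(\gamma\otimes\gamma')(u')=1-u'$ and $(\gamma\otimes\gamma')(vv')=vv'$. Writing $\theta$ and $\tau$ for the involutions of $[a+c+4ac\qir b)$ and $[c\qil bd)$ (so $\theta$ fixes the first standard generator and negates the second, while $\tau$ sends the first generator $p$ to $1-p$ and fixes the second $q$), these four identities say precisely that $\Psi$ transports $\theta\otimes\tau$ to $\gamma\otimes\gamma'$. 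Thus $\Psi$ is an isomorphism of $F$-algebras with involution from $[a+c+4ac\qir b)\otimes[c\qil bd)$ onto $(A,\gamma\otimes\gamma')$, the latter being the $F$-algebra with involution underlying $[a\qilr b)\boxtimes[c\qilr d)$.

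It remains to match the semi-traces, i.e.\ to see that $\Psi$ is an isomorphism of $F$-algebras with quadratic pair $[a+c+4ac\qir b)\otimes\qp{c}{bd}\to[a\qilr b)\boxtimes[c\qilr d)$. If $\kar(F)\neq 2$ this is automatic: $\qp{c}{bd}$ is $[c\qil bd)$ with its unique semi-trace $\tfrac12\Trd$ (the one given by $p$ agrees with it since $p-\tfrac12\in\Alt([c,bd],\tau)$), and both $\boxtimes$ and $\otimes$ in the statement are then formed with $\tfrac12\Trd$ on the relevant degree-$16$ algebra, which $\Psi$ preserves. Now assume $\kar(F)=2$. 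By \cref{lemma:explict} the semi-trace of $[a+c+4ac\qir b)\otimes\qp{c}{bd}$ is the one given by $1\otimes p$, and $\Psi(1\otimes p)=u'$ (i.e.\ $1_Q\otimes u'$); as $\Psi$ preserves reduced traces, $\Psi$ carries this semi-trace to the semi-trace $h'$ of $(A,\gamma\otimes\gamma')$ given by $1_Q\otimes u'$ (note $1_Q\otimes u'+(\gamma\otimes\gamma')(1_Q\otimes u')=1\otimes 1$). For $s_1\in\Sym(Q,\gamma)$ and $s_2\in\Sym(Q',\gamma')$ one has $h'(s_1\otimes s_2)=\Trd_A((1_Q\otimes u')(s_1\otimes s_2))=\Trd_Q(s_1)\cdot\Trd_{Q'}(u's_2)=0$, since $\Trd_Q(\Sym(Q,\gamma))=\{0\}$ because $\gamma$ is symplectic in characteristic $2$ by \cite[(2.6)]{Knus:1998}. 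By the uniqueness part of \cref{choicef}, $h'$ is the semi-trace of $[a\qilr b)\boxtimes[c\qilr d)$, which completes the proof.

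The main obstacle is the very first step, namely the choice of $\tilde u=1-u-u'+2uu'$: it is the symmetric element of $(A,\gamma\otimes\gamma')$ generating a quadratic subalgebra with the prescribed discriminant $a+c+4ac$, and producing it (rather than verifying the relations it satisfies, which is then mechanical) is the real content. A secondary subtlety, present only in characteristic $2$, is the semi-trace bookkeeping in the last step: the two sides of the statement are deliberately written with $\boxtimes$ and $\otimes$, and reconciling the semi-traces rests on the characterization of $\boxtimes$ via vanishing on products of symmetric elements established in \cref{choicef}.
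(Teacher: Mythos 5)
Your proof is correct and follows essentially the same route as the paper's: you exhibit the two commuting $\sigma$-invariant quaternion subalgebras explicitly (your generator $\tilde u=1-u-u'+2uu'$ is just $1-i'$ for the paper's $i'=i\otimes 1+(1-2i)\otimes u$, and your $vv'$ is the paper's $j\otimes v$), identify the involution types, and then reconcile the semi-traces in characteristic two via \cref{lemma:explict} and the uniqueness statement of \cref{choicef}. The only differences are cosmetic (a change of generator and a more detailed verification of the relations), so nothing further is needed.
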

\begin{proof}
Let $(B,\sigma,f)={[a\qilr b)}\boxtimes[{c}\qilr{d})$, $(Q_1,\gamma_1)= [a\qilr b)$ and  $(Q_2,\gamma_2)=[{c}\qilr{d})$. 
Let $i,j\in Q_1$  be such that  $i^2=i+a$, $j^2= b$ and $ij=j-ji$ and let  $ u, v\in Q_2$ be such that $u^2 =u+c$, $v^2=d$ and $uv=v-vu$.  In $B$ we have that $\sigma(i\otimes 1)=1\otimes 1 - i\otimes 1$, $\sigma(j\otimes 1)=-j\otimes 1$, $\sigma(1\otimes u)=1\otimes 1-1\otimes u$ and $\sigma(1\otimes v)=-1\otimes v$.

Let $i'= i\otimes 1 + (1-2i)\otimes u$, $j'= j\otimes 1$, $u'=1\otimes u$ and $v'= j\otimes v$. Then one easily checks that $$Q'_1=F\oplus Fi'\oplus Fj'\oplus Fi'j' \quad\mbox{ and }\quad Q'_2=F\oplus Fu'\oplus Fv'\oplus Fu'v'$$ are $\s$-invariant  $F$-subalgebras of $B$ that commute elementwise with one another. 
We set $\tau_1=\s|_{Q'_1}$ and $\tau_2=\s|_{Q'_2}$. We have 
$$(Q'_1,\tau_1)\simeq [a+c+4ac \qir b) \textrm{ and } (Q'_2,\tau_2) \simeq[{c}\qil{bd})\,.$$
Hence $(B,\s)\simeq  {[a+c+4ac \qir b)}\otimes [{c}\qil{bd})$.
If $\kar(F)\neq 2$, then the semi-trace on $(B,\s)$ is uniquely determined, and in this case there is nothing further to show. 

Assume $\kar(F)=2$. 
Then $(B,\s,f)\simeq (Q_1,\gamma_1)\otimes(Q_2,\gamma_2,h)$ for any choice of semi-trace $h$ on $(Q_2,\gamma_2)$ by \cref{choicef}. Let  $h$ to be the semi-trace given by $u$.
Then for all  $s\in \Sym(Q_1\otimes_FQ_2, \gamma_1\otimes\gamma_2)=\Sym(Q'_1\otimes_FQ'_2, \tau_1\otimes\tau_2)$ we have that 
$$\Trd_{Q_1\otimes_FQ_2}((1\otimes u)\cdot s)= \Trd_{Q_1\otimes_FQ_2}(u'\cdot s)=\Trd_{Q'_1\otimes_FQ'_2}((1\otimes u')\cdot s)\,.$$ 
Hence
$(Q_1,\gamma_1)\otimes(Q_2,\gamma_2,h)\simeq  (Q'_1,\tau_1)\otimes (Q'_2,\tau_2,g) $ for the  semi-trace $g$ on $(Q'_2,\tau_2)$ given by  ${u'}$  by \cref{lemma:explict}. That is, $(B,\s,f)\simeq   {[a+c+4ac \qir b)}\otimes \qp{c}{bd}$.
\end{proof}

 \section{Totally decomposable quadratic pairs}

We call an $F$-algebra with quadratic pair \emph{totally decomposable} if it is isomorphic to a tensor product of a totally decomposable $F$-algebra with involution and an $F$-quaternion algebra with quadratic pair.  
It follows from \cref{lemma:decomp} that taking the tensor product of a totally decomposable $F$-algebra with involution and a totally decomposable $F$-algebra with  quadratic pair gives a totally decomposable  algebra with quadratic pair. 

%If $(A,\s,f)$ is a totally decomposable $F$-algebra with quadratic pair, then it follows that $(A,\s)$  is a totally decomposable  $F$-algebra with involution. % However, not every semi-trace on a totally decomposable algebra with  involution of the correct type yields  a totally decomposable algebra with quadratic pair. 
%In characteristic two, we obtain an algebra with quadratic pair by endowing an algebra with symplectic involution with a semi-trace.
%We will show  in \cref{thm:pfisterfactquad} that totally decomposable quadratic pairs on split algebras are always adjoint to a Pfister form, and hence those quadratic pairs adjoint to  quadratic forms that are not similar to Pfister forms are not totally decomposable. 
%%Conversely,  it follows easily from  \cite[(12.35)]{Knus:1998} that every  symplectic  involution  on a split algebra, and hence any symplectic involution in the triple of an adjoint quadratic pair of a quadratic form in characteristic two is a totally decomposable involution. 

Let $(A,\s,f)$ be a totally decomposable $F$-algebra with quadratic pair. Then there exists a totally decomposable $F$-algebra with quadratic pair $(B,\tau)$ and an $F$-quaternion  algebra with involution $(Q,\gamma,g)$ such that $(A,\s,f)\simeq (B,\tau)\otimes(Q,\gamma,h)$. 
If $\kar(F)\neq 2$ then $(B,\tau)$ is necessarily orthogonal. We now show that, even if $\kar(F)=2$, we may always find a decomposition as above where $(B,\tau)$ is orthogonal.  This will allow us in the next section to use the Pfister invariant of $(B,\tau)$ to study the quadratic pair $(A,\s,f)$.

\begin{prop}\label{totdecompqp} 
Let $(A,\s,f)$ be a totally decomposable $F$-algebra with quadratic pair. Then there exists a totally decomposable $F$-algebra with orthogonal involution $(B,\tau)$ and 
  an $F$-quaternion algebra with quadratic pair $(Q,\gamma,g)$ such that $(A,\s,f)\simeq  (B,\tau)\otimes (Q,\gamma, g)$.
\end{prop}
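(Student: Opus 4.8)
The plan is to start from an arbitrary decomposition and fix up the non-orthogonal quaternion factors one pair at a time using \cref{symplectize}. Concretely, suppose $(A,\s,f)\simeq (B,\tau)\otimes(Q,\gamma,g)$ where $(B,\tau)\simeq\bigotimes_{i=1}^{n}(Q_i,\tau_i)$ is a tensor product of $F$-quaternion algebras with involution. If $\kar(F)\neq 2$ there is nothing to prove, since each $(Q_i,\tau_i)$ is then automatically orthogonal (a tensor product containing a symplectic factor is symplectic by \cite[(2.23)]{Knus:1998}, contradicting that $(B,\tau)$ must be orthogonal). So assume $\kar(F)=2$. Then each $(Q_i,\tau_i)$ is either orthogonal or symplectic; by \cref{prop:orth} the orthogonal ones are of the form $[a_i\qir b_i)$, and the symplectic ones are canonical, i.e.\ of the form $[a_i\qilr b_i)$. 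The goal is to show that the symplectic factors can be traded off in pairs until at most one remains, which then gets absorbed into the quaternion-with-quadratic-pair slot.

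The key step is as follows. First I would group the symplectic factors together, using the associativity from \cref{lemma:decomp} and commutativity of tensor product, and write
\begin{equation*}
(A,\s,f)\simeq (B',\tau')\otimes\bigl(\textstyle\bigboxtimes_{k=1}^{m}(Q_{i_k},\tau_{i_k})\bigr)\otimes(Q,\gamma,g)\,,
\end{equation*}
where $(B',\tau')$ collects the genuinely orthogonal factors and $(Q_{i_1},\tau_{i_1}),\dots,(Q_{i_m},\tau_{i_m})$ are the symplectic (canonical) ones; here I use the unambiguity of $(B',\tau')\otimes\bigboxtimes(\cdots)\otimes(Q,\gamma,g)$ noted after \cref{choicef}. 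Now apply \cref{symplectize} to the first two symplectic factors: ${[a\qilr b)}\boxtimes[c\qilr d)\simeq {[a+c+4ac\qir b)}\otimes\qp{c}{bd}$. The left side is a $\boxtimes$ of two symplectic quaternion algebras; the right side is an \emph{orthogonal} quaternion algebra with involution tensored with a quaternion algebra with quadratic pair. Since in characteristic two $\qp{c}{bd}$ is a quaternion algebra whose underlying involution is symplectic (it is the canonical involution $\gamma$, equipped with a specific semi-trace), I can then re-apply \cref{choicef}: the factor $\qp{c}{bd}$, when $\boxtimes$-multiplied by the next symplectic quaternion algebra $[c'\qilr d')$ — or rather, absorbed appropriately — behaves like a symplectic factor again, so \cref{symplectize} applies once more. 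Iterating, each application of \cref{symplectize} converts a pair (symplectic, symplectic) into (orthogonal) $\otimes$ (one quaternion-with-quadratic-pair), and the new quadratic-pair factor pairs with the next symplectic quaternion algebra for the following step. After $m-1$ steps (if $m\geq 1$), all symplectic quaternion-with-\emph{involution} factors are used up, leaving a single quaternion-with-quadratic-pair factor, and all the newly produced factors $[a+c+4ac\qir b)$ are orthogonal. If $m=0$ to begin with, $(Q,\gamma,g)$ already serves. Collecting the orthogonal factors $(B',\tau')$ together with all the $[a+c+4ac\qir b)$'s into a single totally decomposable orthogonal $(B,\tau)$ — which is orthogonal because a tensor product of quaternion algebras with involution all of orthogonal type is orthogonal, again by \cite[(2.23)]{Knus:1998} — gives $(A,\s,f)\simeq (B,\tau)\otimes(Q',\gamma',g')$ with $(B,\tau)$ orthogonal, as required.

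The main obstacle I anticipate is bookkeeping the semi-traces correctly through the iteration: \cref{symplectize} and \cref{choicef} together guarantee that $\boxtimes$ of symplectic quaternion algebras, and $\otimes$ of an orthogonal quaternion algebra with a quadratic-pair quaternion algebra, carry canonical semi-traces, but one must check at each step that the quadratic-pair factor produced by the previous application of \cref{symplectize} can legitimately be fed back in — i.e.\ that $(B,\tau)\otimes\bigl(\qp{c}{bd}\bigr)$ when further tensored with another symplectic quaternion algebra is again of the shape to which \cref{symplectize} (via \cref{choicef}, which says the semi-trace on a $\boxtimes$-product of symplectics is independent of the chosen semi-trace on the second factor) applies. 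Making the induction statement precise — e.g.\ ``for any $n\geq 0$, a tensor product of $n$ symplectic quaternion algebras with canonical involution, $\boxtimes$-multiplied appropriately and tensored with one quaternion algebra with quadratic pair, is isomorphic to a totally decomposable orthogonal algebra with involution tensored with one quaternion algebra with quadratic pair'' — and then driving the induction by a single invocation of \cref{symplectize} should close the argument; the parity/leftover issue ($m$ odd versus even) is handled automatically because \cref{symplectize} consumes symplectic-with-involution factors one at a time against the running quadratic-pair factor, not two at a time against each other.
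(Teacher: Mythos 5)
Your closing formulation---consume the symplectic quaternion factors one at a time against the running quadratic-pair factor, each step being \cref{choicef} followed by \cref{symplectize}---is precisely the paper's proof and is correct. Be aware, though, that your intermediate grouping $(B',\tau')\otimes\bigl((Q_{i_1},\tau_{i_1})\boxtimes\cdots\boxtimes(Q_{i_m},\tau_{i_m})\bigr)\otimes(Q,\gamma,g)$ is not well formed (a $\boxtimes$-product already carries a quadratic pair, and the tensor product of two algebras with quadratic pair is undefined), and the variant that pairs the first two symplectic involution factors against each other would leave the original $(Q,\gamma,g)$ stranded as a second quadratic-pair factor; only the one-at-a-time version goes through.
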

\begin{proof}  The result is trivial if $\kar(F)\neq2$. Assume that $\kar(F)=2$. 
As $(A,\s,f)$ is totally decomposable, there exist $F$-quaternion algebras with involution $(Q_i,\sigma_i)$ for $i=1,\ldots, n-1$ and an $F$-quaternion algebra with quadratic pair $(Q_n,\gamma,h)$ such that 
$$(A,\s,f)\simeq (Q_1,\s_1)\otimes \ldots\otimes  (Q_{n-1},\s_{n-1})\otimes (Q_n,\gamma, h)\,.$$
%Assume  $\kar(F)\neq 2$. Then we may consider $(A,\s,f)$ and $(Q_n,\gamma,h)$ as  $F$-algebras with orthogonal involution. As $(A,\s)$ is orthogonal, an even number of the $F$-algebras with involution $(Q_1,\s_1), \ldots, (Q_n,\gamma)$ must be even by \cite[(2.23)]{Knus:1998}. It follows from (\cref{symplectize}), ignoring the extra structure of the semi-traces in the quadratic pairs, that the  tensor product of any two $F$-algebras with symplectic involution is isomorphic to the tensor product of two $F$-algebras with orthogonal involution. The result then follows easily.
Suppose $\s_i$ is symplectic. In particular, it is the canonical involution on $Q_i$, for some $i\in\{1,\ldots, n-1\}$. 
% By (\cref{prop:orth}), there exists $a\in F$ with $-4a\neq 1$ and $b\in F^\times$ such that $(Q,\s)\simeq [a\qir b)$. Further, by (\cref{prop:qpquat}), there exists  $c\in F$ with $-4c\neq 1$ and $d\in F^\times$ such that $(Q,\gamma,h)\simeq \qp{a}{b}$.
Then by \cref{choicef}, we have that 
$$ (Q_i,\s_i)\otimes (Q_n,\gamma,h)\simeq (Q_i,\s_i)\boxtimes (Q_n,\gamma)\,. $$
 Hence,  by \cref{symplectize}, 
there exists 
 an $F$-quaternion algebra with orthogonal involution  $(Q'_i,\tau)$ and an $F$-quaternion algebra with quadratic pair  $(Q'_n,\gamma',h')$
 such that
$$ (Q_i,\s_i)\otimes (Q_n,\gamma,h)\simeq  (Q'_i,\tau)\otimes (Q'_n,\gamma',h')  \,.$$ 
Using this argument repeatedly for all $i=1,\ldots, n-1$ such that $\s_i$ is symplectic, we modify our expression of $(A,\s,f)$ above to obtain the result.
\end{proof}

%One can use the   the isomorphism from  \cref{symplectize},  similarly to how it is used in  in the proof of \cref{totdecompqp} but in the opposite direction, to give the following symplectic analog  to \cref{totdecompqp}. As we do not use this result here, we do not give details of the proof.

For interest, we also record a characteristic two specific counterpart of the previous statement, 
which produces a symplectic instead of an orthogonal factor.
\begin{prop}
Assume that $\kar(F)=2$. 
Let $(A,\s,f)$ be a totally decomposable $F$-algebra with quadratic pair with $\deg(A)=2^n$, where $n\geq 2$. Then there exist $F$-quaternion algebras with canonical  involution $(Q_i,\gamma_i)$ for $i=1,\ldots, n$  such that
$(A,\s,f)\simeq  (Q_1,\gamma_1)\otimes\ldots\otimes(Q_{n-1},\gamma_{n-1})\boxtimes (Q_n,\gamma_n)\,.$
\end{prop}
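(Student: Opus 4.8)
The plan is to argue by induction on $n\geq 2$, using \cref{totdecompqp} to split off a single $F$-quaternion algebra with quadratic pair and then rewriting a degree-$4$ subexpression so that the remainder can be fed back into the induction hypothesis. A preliminary observation I would record first is that (in $\kar(F)=2$) every $F$-quaternion algebra with quadratic pair $(Q,\gamma,g)$ is isomorphic to $\qp{a}{b}$ for suitable $a\in F$ and $b\in F^\times$. Indeed $\gamma$ is symplectic, hence the canonical involution of $Q$, so $\gamma(x)=\Trd_Q(x)-x$, and by \cite[(5.7)]{Knus:1998} the semi-trace $g$ is given by an element $\ell\in Q$ with $\ell+\gamma(\ell)=1$, that is $\Trd_Q(\ell)=1$; then $\ell\notin F$ and the reduced characteristic polynomial gives $\ell^2=\ell+a$ with $a=\Nrd_Q(\ell)$. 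Since $1-\ell$ has the same reduced characteristic polynomial as $\ell$, the Skolem--Noether theorem yields $v\in Q^\times$ with $v\ell v^{-1}=1-\ell$; then $v^2$ commutes with $\ell$, lies in $F[\ell]$, and comparing $v(v^2)v^{-1}=v^2$ forces $v^2=b\in F^\times$. Now $(1,\ell,v,\ell v)$ is an $F$-quaternion basis of $Q$ with $\gamma(\ell)=1-\ell$ and $\gamma(v)=-v$, and $g$ is given by $\ell$, so $(Q,\gamma,g)\simeq\qp{a}{b}$.

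The key rearrangement I would extract from \cref{symplectize} is the following statement about $F$-algebras with involution in characteristic two: for $a\in F$, $b\in F^\times$ and an $F$-quaternion algebra with canonical involution $[c\qilr d)$ there is an isomorphism
$$[a\qir b)\otimes[c\qilr d)\;\simeq\;[a+c\qilr b)\otimes[c\qilr db^{-1})\,.$$
To see this, note that by definition of $\boxtimes$ the underlying $F$-algebra with involution of $[p\qilr q)\boxtimes[r\qilr s)$ is that of $[p\qilr q)\otimes[r\qilr s)$, whereas by \cref{symplectize} it equals that of $[p+r\qir q)\otimes\qp{r}{qs}$, namely $[p+r\qir q)\otimes[r\qilr qs)$; solving for the parameters gives the displayed isomorphism. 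In particular, a tensor product of an $F$-quaternion algebra with orthogonal involution and an $F$-quaternion algebra with canonical involution is isomorphic, as an $F$-algebra with involution, to a tensor product of two $F$-quaternion algebras with canonical involution.

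For the induction, the case $n=2$ is immediate: \cref{totdecompqp} gives $(A,\s,f)\simeq[a\qir b)\otimes(Q,\gamma,g)$, the preliminary observation gives $(Q,\gamma,g)\simeq\qp{c}{d}$, and reading \cref{symplectize} backwards (with $4ac=0$) yields $(A,\s,f)\simeq[a+c\qilr b)\boxtimes[c\qilr db^{-1})$. For $n\geq 3$, \cref{totdecompqp} gives $(A,\s,f)\simeq(B,\tau)\otimes(Q,\gamma,g)$ with $(B,\tau)$ totally decomposable orthogonal of degree $2^{n-1}$ and $(Q,\gamma,g)$ an $F$-quaternion algebra with quadratic pair. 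Writing $(B,\tau)\simeq(Q_1,\s_1)\otimes(B',\tau')$ with $(Q_1,\s_1)$ one of its quaternion factors --- all of which are orthogonal by \cite[(2.23)]{Knus:1998} --- the algebra $(B',\tau')$ is totally decomposable orthogonal of degree $2^{n-2}$, so $(B',\tau')\otimes(Q,\gamma,g)$ is a totally decomposable $F$-algebra with quadratic pair of degree $2^{n-1}$, and by the induction hypothesis it is isomorphic to $(T_1,\gamma_1)\otimes\cdots\otimes(T_{n-2},\gamma_{n-2})\boxtimes(T_{n-1},\gamma_{n-1})$ with all $\gamma_i$ canonical. By \cref{lemma:decomp} one has $(A,\s,f)\simeq(Q_1,\s_1)\otimes\bigl((B',\tau')\otimes(Q,\gamma,g)\bigr)$, and reassociating --- which is legitimate by \cref{lemma:decomp} together with \cref{choicef}, so that $(Q_1,\s_1)\otimes(T_1,\gamma_1)\otimes\cdots\otimes(T_{n-2},\gamma_{n-2})\boxtimes(T_{n-1},\gamma_{n-1})$ is unambiguous --- followed by applying the key rearrangement to $(Q_1,\s_1)\otimes(T_1,\gamma_1)$ replaces it by a tensor product of two $F$-quaternion algebras with canonical involution, which completes the induction.

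The main obstacle is precisely this passage from orthogonal to canonical quaternion factors: one cannot convert an isolated orthogonal quaternion factor into a canonical one, and the role of \cref{symplectize} is to let one trade an orthogonal factor against an \emph{adjacent} canonical factor. The latter must first be manufactured out of the quadratic pair factor (via the preliminary observation and \cref{symplectize} read backwards), after which the induction propagates it leftward through the remaining orthogonal factors one at a time. A secondary point is to keep track of the associativity of $\otimes$ and $\boxtimes$ when the leftmost factor carries an orthogonal rather than a symplectic involution; this is handled by \cref{lemma:decomp} and \cref{choicef}.
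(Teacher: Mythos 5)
Your proof is correct and takes essentially the same route as the paper: normalise the orthogonal factors via \cref{prop:orth} and the quadratic-pair factor to the form $\qp{c}{d}$, then read \cref{symplectize} in the reverse direction to trade each orthogonal factor against an adjacent canonical one, propagating leftward; the paper leaves this induction implicit and simply cites \cite[(5.6)]{dolphin:quadpairs} for the normal form $\qp{c}{d}$, which you re-derive inline. The only minor caveat is that your Skolem--Noether step requires the standard extension to \'etale subalgebras when $F[\ell]\simeq F\times F$, but this does not affect the argument.
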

\begin{proof} 
 By  \cref{prop:orth}, for every $F$-quaternion algebra with orthogonal involution $(Q,\tau)$,  there exists an $a\in F$ and $b\in F^\times$ such that $(Q,\tau)\simeq [a\qir b)$. Similarly, by \cite[(5.6)]{dolphin:quadpairs}, for every $F$-quaternion algebra with quadratic pair $(Q,\gamma,f)$ 
  there exists an $c\in F$ and $d\in F^\times$ such that $(Q,\gamma,f)\simeq\qp{c}{d}$. The result thus follows using the isomorphism in  \cref{symplectize} in a similar way as to how it is used in  \cref{totdecompqp}, but in the opposite direction.
\end{proof}

%
%\begin{remark}
%Assume $\kar(F)=2$. 
%Let $(A,\s,f)$ be a totally decomposable $F$-algebra with quadratic pair and let $(B,\tau)$ be  a totally decomposable $F$-algebra with involution and $(Q,\gamma,h)$ an $F$-quaternion  algebra with involution  such that $(A,\s,f)\simeq (B,\tau)\otimes(Q,\gamma,h)$. In (\cref{totdecompqp}) we use the isomorphism from  (\cref{symplectize}) to show that $(B,\tau)$ can always be assumed to be orthogonal. However, we may also use the isomorphism in the other direction, similarly to how it is used in  in the proof of (\cref{totdecompqp}),  to show that $(B,\tau)$ may also always be assumed to be symplectic, and further that $(B,\tau)$ may be assumed to decompose into a product of $F$-quaternion algebras with  symplectic involution.
%\end{remark}

\section{Totally decomposable quadratic pairs on a split algebra}\label{section:main}

We  now prove our main result, that over fields of characteristic two a split algebra with totally decomposable  quadratic pair is adjoint to a Pfister form.  We use the following result, which is an approach unique to  fields of characteristic two. This approach gives more information on the $m$-fold Pfister form $\pi$ adjoint to a totally decomposable quadratic pair on an algebra  of degree $2^m$ over a field $F$ of characteristic $2$ after extending to splitting field $K$.  Specifically, we show that we can always find an  $(m-1)$-fold bilinear  Pfister form $\varphi$ defined over $F$ such  that $\varphi_K$ factors 
$\pi$.
% For the proof in characteristic different from two, we refer to \cite{Becher:qfconj}.

\begin{prop}\label{prop:char2extra}
Assume $\kar(F)=2$. Let  $(B,\tau)$ be a totally decomposable orthogonal $F$-algebra with involution,  $(Q,\gamma,h)$ be an  $F$-quaternion algebra with quadratic pair and  $(A,\s,f)= (B,\tau)\otimes (Q,\gamma,h)$. Then for any   field extension  $K/F$ such that $A_K$ is split, there exists a $1$-fold Pfister form  $\pi$  over $K$ such that $(A,\s,f)_K\simeq \Ad((\mathfrak{Pf}(B,\tau))_K\otimes \pi)$.
\end{prop}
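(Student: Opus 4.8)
The goal is to show that after extending to a splitting field $K$ of $A$, the quadratic pair $(A,\s,f)_K$ is adjoint to a quadratic form of the shape $(\mathfrak{Pf}(B,\tau))_K\otimes\pi$ for a $1$-fold quadratic Pfister form $\pi$ over $K$. The natural route is to work with the two tensor factors separately. First I would note that over $K$ the algebra $A$ splits, and so does $B$ (being a subalgebra-type factor of $A$, or more directly: $A_K$ split forces $B_K$ and $Q_K$ to be split since $\deg$ is multiplicative and all three are split or one has index $2$; a short argument or a citation handles this). Since $B_K$ is split with orthogonal involution, by the discussion after the definition of $\Ad$ we may write $(B,\tau)_K\simeq\Ad(\beta)$ for some nondegenerate symmetric bilinear form $\beta$ over $K$. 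Likewise $(Q,\gamma,h)_K\simeq\Ad(\rho)$ for some nonsingular $2$-dimensional quadratic form $\rho$ over $K$ by \cref{prop:hypiffquad}'s ambient setup (every split algebra with quadratic pair is adjoint to a nonsingular quadratic form), and scaling $\rho$ we may take $\rho$ to represent $1$, i.e.\ $\rho$ is a $1$-fold Pfister form; set $\pi=\rho$.

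The second step is to identify $\beta$ up to the relevant equivalence. Here I would invoke \cref{thm:pfisterinvar}: a totally decomposable orthogonal involution is anisotropic (resp.\ metabolic) exactly when its Pfister invariant is, and moreover $\mathfrak{Pf}$ commutes with scalar extension. Since $(B,\tau)_K\simeq\Ad(\beta)$ and $\beta$ is a bilinear form over the (now) split side, the bilinear Pfister form $\mathfrak{Pf}(B,\tau)_K=(\mathfrak{Pf}(B,\tau))_K$ should be exactly $\beta$ up to similarity factor — indeed, over a field in characteristic two a totally decomposable orthogonal involution adjoint to a bilinear form has that bilinear form isometric (or similar) to its Pfister invariant, which is the content one extracts from \cite{dolphin:orthpfist}. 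So $(B,\tau)_K\simeq\Ad((\mathfrak{Pf}(B,\tau))_K)$.

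The third step is to combine. By \cref{prop:tensor}, $\Ad(\varphi\otimes\rho)\simeq\Ad(\varphi)\otimes\Ad(\rho)$ for a symmetric bilinear form $\varphi$ and a nonsingular quadratic form $\rho$. Applying this with $\varphi=(\mathfrak{Pf}(B,\tau))_K$ and $\rho=\pi$ gives
\[
\Ad\bigl((\mathfrak{Pf}(B,\tau))_K\otimes\pi\bigr)\simeq\Ad((\mathfrak{Pf}(B,\tau))_K)\otimes\Ad(\pi)\simeq(B,\tau)_K\otimes(Q,\gamma,h)_K=(A,\s,f)_K\,,
\]
using that tensoring an algebra with involution by an algebra with quadratic pair commutes with scalar extension (immediate from the explicit description of the semi-trace in \cref{lemma:explict}). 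Since $(\mathfrak{Pf}(B,\tau))_K$ is an $(m-1)$-fold bilinear Pfister form and $\pi$ a $1$-fold quadratic Pfister form, the product $(\mathfrak{Pf}(B,\tau))_K\otimes\pi$ is an $m$-fold quadratic Pfister form, consistent with the degree bookkeeping.

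**Expected main obstacle.** The delicate point is the second step: pinning down that the bilinear form $\beta$ with $(B,\tau)_K\simeq\Ad(\beta)$ is the Pfister invariant itself, and not merely some bilinear form with the same function-field/isotropy behaviour. The statement "totally decomposable orthogonal involution on a split algebra is adjoint to its Pfister invariant" is essentially the split case of what one wants and should be available from \cite{dolphin:orthpfist} (e.g.\ the results surrounding (7.3)--(7.5) there), but one must be careful about the similarity ambiguity in the adjoint construction — $\Ad(\beta)\simeq\Ad(\beta')$ only forces $\beta,\beta'$ to be similar, and over a field of characteristic two a bilinear Pfister form is similar to itself only via square classes, so the similarity factor can be absorbed into $\pi$ (replacing $\pi$ by $c\pi$, which is still a $1$-fold Pfister form after rescaling to represent $1$, possible since $1$-fold Pfister forms similar to a fixed one all represent $1$). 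I would make sure the rescaling of $\pi$ is done after, not before, matching $\beta$ to $\mathfrak{Pf}$, so that no constant is left dangling.
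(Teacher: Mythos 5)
Your first step is already fatally flawed: it is not true that $A_K$ being split forces $B_K$ and $Q_K$ to be split. Since quaternion algebras are $2$-torsion in the Brauer group, the splitting of $A_K=B_K\otimes_K Q_K$ only tells you that $B_K$ and $Q_K$ are Brauer-equivalent, and both may well have index $2$. A concrete instance: take $Q$ a quaternion division $F$-algebra, let $(B,\tau)$ be an orthogonal involution on a copy of $Q$ and $(Q,\gamma,h)$ a quadratic pair on $Q$; then $A\simeq Q\otimes_F Q$ is split already over $K=F$, while neither factor is. In that situation $(B,\tau)_K$ is not adjoint to any bilinear form over $K$ and $(Q,\gamma,h)_K$ is not adjoint to any quadratic form, so the strategy of splitting the two tensor factors separately and reassembling via \cref{prop:tensor} collapses. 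This is not a degenerate corner case; it is precisely the situation that makes Pfister-factor-type statements nontrivial. The paper's proof avoids the issue entirely by never decomposing $(A,\s,f)_K$ over $K$: it takes the quadratic form $\rho$ over $K$ adjoint to the whole of $(A,\s,f)_K$, and (when $\varphi_K=(\mathfrak{Pf}(B,\tau))_K$ is anisotropic) passes to the function field $L=K(\varphi_K)$, where $(B,\tau)_L$ becomes metabolic by \cref{thm:pfisterinvar}, hence $(A,\s,f)_L$ and therefore $\rho_L$ are hyperbolic; the Hoffmann--Laghribi theorem (\cref{prop:funcfieldbf}) then shows that $\varphi_K$ factors $\rho_{\an}$, and a dimension count forces $\rho\simeq\varphi_K\otimes\pi'$ with $\dim(\pi')=2$.

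A secondary point: even in the case where both factors do split over $K$, your identification of $\beta$ with $(\mathfrak{Pf}(B,\tau))_K$ requires the full main theorem of \cite{dolphin:orthpfist} (a split totally decomposable orthogonal involution in characteristic two is adjoint to its Pfister invariant), not merely the statements around (7.3)--(7.5) that the present paper imports. That theorem is available in the literature, so this part could be repaired by a precise citation, but it would not rescue the argument from the failure of the first step.
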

\begin{proof}
Let $\varphi=\mathfrak{Pf}(B,\tau)$ and $\rho$ a quadratic form over $K$ with $(A,\s,f)_K\simeq \Ad(\rho)$.  Note that $\dim(\rho)=2\dim(\varphi)$.

Assume first that $(B,\tau)_K$ is metabolic. Then by  \cite[(A.5)]{Tignol:galcohomgps} we have that $(A,\s,f)_K$ is hyperbolic and by \cref{prop:hypiffquad} that $\rho$ is hyperbolic.
We may thus take $\pi$ to be the hyperbolic $2$-dimensional quadratic form. 

Assume now that $(B,\tau)_K$ is not metabolic. Then the bilinear Pfister form $\varphi_K$ is anisotropic by  \cref{thm:pfisterinvar}.
We consider its function field $L=K(\varphi_K)$. 
Since $\varphi_L$ is metabolic, it follows by \cref{thm:pfisterinvar} that $(B,\tau)_L$ is metabolic.
Hence, $(A,\s,f)_L$ is hyperbolic by \cite[(A.5)]{Tignol:galcohomgps} and therefore $\rho_L$ is hyperbolic by \cref{prop:hypiffquad}.
By \cref{prop:funcfieldbf}, there exists a non-trivial nonsingular  quadratic form $\pi'$ over $K$ such that $\rho_\an\simeq \varphi_K\otimes\pi'$. 
We have $$\dim(\varphi)\cdot \dim(\pi')=\dim(\rho_\an)\leqslant \dim(\rho)= 2\dim(\varphi)\,.$$
As $\kar(F)=2$,  by \cite[(7.32)]{Elman:2008} $\dim(\pi')$ is even.
It follows that  $\dim(\pi')=2$ and $\rho_\an\simeq \rho$.
In particular, $\pi'$ is similar to a $1$-fold Pfister form $\pi$ and $\rho\simeq\rho_{\an}\simeq \varphi_K\otimes\pi'$. Hence $\rho$ is similar to $\varphi_K\otimes\pi$.
 \end{proof}

\begin{cor} \label{cor:main}
Assume that $\kar(F)=2$. Let $n\in\mathbb{N}$, let $(A,\s,f)$ be a totally decomposable $F$-algebra with quadratic pair with $\deg(A)=2^{n+1}$ and let $K/F$ be a  field extension such that  $A_K$ is split. 
Then  there exists an 
$n$-fold bilinear Pfister form $\varphi$ over $K$ and an
$(n+1)$-fold Pfister form $\rho$ over $K$ such that
$\varphi_K$ factors $\rho$ and
 $(A,\s,f)_K\simeq\Ad(\rho)$.
\end{cor}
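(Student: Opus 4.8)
The plan is to read \cref{cor:main} off from \cref{totdecompqp} and \cref{prop:char2extra}: essentially all of the work has already been done, and what remains is only bookkeeping with the number of Pfister slots.

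First I would use \cref{totdecompqp} to split off a quaternion factor with an orthogonal complementary piece: write $(A,\s,f)\simeq (B,\tau)\otimes (Q,\gamma,g)$ with $(B,\tau)$ a totally decomposable \emph{orthogonal} $F$-algebra with involution and $(Q,\gamma,g)$ an $F$-quaternion algebra with quadratic pair. Since $\deg(Q)=2$ and $\deg(A)=2^{n+1}$, comparison of degrees gives $\deg(B)=2^{n}$; as $(B,\tau)$ is a tensor product of $F$-quaternion algebras with involution, this tensor product has exactly $n$ factors, so $\varphi:=\mathfrak{Pf}(B,\tau)$ is an $n$-fold bilinear Pfister form over $F$. (When $n=0$ one has $B=F$ and $\varphi=\qf{1}$, the $0$-fold bilinear Pfister form.)

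Next I would feed this decomposition into \cref{prop:char2extra}: since $A_K$ is split, that proposition produces a $1$-fold Pfister form $\pi$ over $K$ with $(A,\s,f)_K\simeq\Ad(\varphi_K\otimes\pi)$. I then set $\rho=\varphi_K\otimes\pi$ and observe that, by definition, a $1$-fold Pfister form is a $2$-dimensional nonsingular quadratic form representing $1$; hence $\rho$, being the tensor product of the $n$-fold bilinear Pfister form $\varphi_K$ with such a form, is by definition an $(n+1)$-fold Pfister form over $K$. By construction $\varphi_K$ factors $\rho$, and $(A,\s,f)_K\simeq\Ad(\rho)$, which is precisely the assertion.

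I do not anticipate any genuine obstacle: the substantive content is entirely contained in \cref{prop:char2extra} (which in turn rests on \cref{thm:pfisterinvar}, the hyperbolicity transfer of \cite[(A.5)]{Tignol:galcohomgps}, \cref{prop:hypiffquad}, and \cref{prop:funcfieldbf}). The only things left to verify are elementary — that $\deg(B)=2^{n}$ pins down the number of quaternion factors of $(B,\tau)$, and that tensoring an $n$-fold bilinear Pfister form with a $1$-fold quadratic Pfister form yields an $(n+1)$-fold quadratic Pfister form — and both follow at once from the definitions recalled earlier in the paper.
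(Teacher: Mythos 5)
Your proposal is correct and takes essentially the same route as the paper's own proof: decompose $(A,\s,f)$ via \cref{totdecompqp} and then apply \cref{prop:char2extra} with $\varphi=\mathfrak{Pf}(B,\tau)$, the degree count pinning down that $\varphi$ is $n$-fold. The only difference is cosmetic — the paper dismisses the case $n=0$ as trivial rather than folding it into the general argument as you do.
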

\begin{proof} For $n=0$, this is trivial. Otherwise, 
by \cref{totdecompqp}, there exists a totally decomposable $F$-algebra with orthogonal involution $(B,\tau)$ and an $F$-quaternion algebra with quadratic pair $(Q,\gamma,h)$ such that $(A,\s,f)\simeq (B,\tau)\otimes (Q,\gamma,h)$. The result then follows from \cref{prop:char2extra} with $\varphi=\mathfrak{Pf}(B,\tau)$.
\end{proof}
%
%
%
%
%\begin{remark}
%The hypothesis  that the $F$-algebra with quadratic pair $(A,\s,f)$ in (\cref{prop:char2extra}) is isomorphic to the tensor product of a totally decomposable $F$-algebra with orthogonal involution and an $F$-quaternion algebra with quadratic pair is not a restriction on $(A,\s,f)$ by  (\cref{totdecompqp}).
%\end{remark}

\begin{thm}\label{thm:pfisterfactquad}
Let $\rho$ be a nonsingular quadratic form over $F$ with $\dim(\rho)\geq 2$.
Then $\Ad(\rho)$ is totally decomposable if and only if $\rho$ is similar to a Pfister form.
%A split $F$--algebra with quadratic pair is totally decomposable if and only if it is isomorphic to $\Ad(\rho)$ for a Pfister form $\rho$ over $F$.
%Let $(Q_{0},\sigma_{0},f)$ be an $F$--quaternion algebra with quadratic pair.
%Let $n\in\mathbb{N}$ and let $(Q_1,\sigma_1),\ldots, (Q_n,\sigma_n)$ be $F$-quaternion algebras with involution, all assumed of orthogonal type in the case where $\kar(F)\neq 2$.
%  If $Q_0\otimes\ldots \otimes Q_n$  is  split, then $(\bigotimes_{i=1}^n (Q_i,\sigma_i))\otimes (Q_{0},\sigma_{0},f)\simeq \Ad(\rho)$ for a Pfister form $\rho$ over $F$.
\end{thm}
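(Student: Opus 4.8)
The plan is to treat the two implications separately, reducing the harder direction ($\Ad(\rho)$ totally decomposable $\Rightarrow$ $\rho$ similar to a Pfister form) to results already established. First I would note that in either case $\dim(\rho)$ is a power of $2$: for a form similar to a Pfister form this is immediate, and a totally decomposable quadratic pair has degree equal to a product of the degrees of its quaternion factors. Write $\dim(\rho)=2^{n+1}$. The case $n=0$ is settled at once, since every $2$-dimensional nonsingular quadratic form is similar to a $1$-fold Pfister form (rescale a nonzero value to $1$, or observe the form is hyperbolic) and $\Ad(\rho)$ is then a quaternion algebra with quadratic pair; so assume $n\geq 1$. For $\Leftarrow$, suppose $\rho\simeq c\,\rho_0$ with $\rho_0$ an $(n+1)$-fold Pfister form. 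By definition $\rho_0\simeq\pi\otimes\psi$ for an $n$-fold bilinear Pfister form $\pi$ and a $2$-dimensional nonsingular quadratic form $\psi$, whence $\Ad(\rho)\simeq\Ad(\rho_0)\simeq\Ad(\pi)\otimes\Ad(\psi)$ by \cref{prop:tensor}; since $\Ad(\pi)$ is a totally decomposable $F$-algebra with involution (by \cref{lemma:tensorbf}, as remarked after the definition) and $\Ad(\psi)$ is an $F$-quaternion algebra with quadratic pair, this exhibits $\Ad(\rho)$ as totally decomposable.

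For $\Rightarrow$, write $\Ad(\rho)=(\End_F(V),\ad_{b_q},f_q)$, which is a split algebra with totally decomposable quadratic pair of degree $2^{n+1}$, and split into cases on the characteristic. If $\kar(F)=2$, then \cref{cor:main} applied with $K=F$ produces an $(n+1)$-fold Pfister form $\rho'$ over $F$ with $\Ad(\rho)\simeq\Ad(\rho')$; as isomorphic adjoint algebras with quadratic pair force the underlying quadratic forms to be similar, $\rho$ is similar to $\rho'$. If $\kar(F)\neq 2$, then quadratic pairs on $\End_F(V)$ coincide with orthogonal involutions and a totally decomposable quadratic pair corresponds to a totally decomposable orthogonal involution, so by the main result of \cite{Becher:qfconj} the involution $\ad_{b_q}$ on the split algebra $\End_F(V)$ is adjoint to a Pfister form; again $\Ad(\rho)\simeq\Ad(\rho')$ for a Pfister form $\rho'$ and $\rho$ is similar to $\rho'$.

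I do not anticipate a genuine obstacle: the substantive content sits in \cref{cor:main} for characteristic two and in \cite{Becher:qfconj} in the remaining case, and \cref{prop:tensor} disposes of the easy direction. The care needed is bookkeeping: checking that the degree of a totally decomposable quadratic pair is a power of $2$ so that dimensions match across all invocations, handling the $2$-dimensional base case on both sides, and, in characteristic different from two, faithfully transcribing ``adjoint to a Pfister form'' between the language of orthogonal involutions and that of quadratic forms using the equivalence of the two notions and the correspondence between split algebras with quadratic pair and nonsingular quadratic forms up to similarity.
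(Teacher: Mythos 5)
Your proposal is correct and follows essentially the same route as the paper: the easy direction via \cref{prop:tensor} and \cref{lemma:tensorbf}, and the converse by splitting on the characteristic, citing \cite{Becher:qfconj} when $\kar(F)\neq 2$ and applying \cref{cor:main} (with $K=F$, since the algebra is already split) when $\kar(F)=2$. The extra bookkeeping you include (power-of-two dimensions, the $2$-dimensional base case, passing between similarity classes and isomorphism of adjoint quadratic pairs) is all consistent with the paper's argument, which leaves these points implicit.
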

\begin{proof}
If $\rho$ is a Pfister form over $F$, then we can write $\rho\simeq\varphi\otimes\pi$ for a bilinear Pfister form $\varphi$ and a $1$-fold quadratic Pfister form $\pi$ over $F$. 
Then by \cref{prop:tensor} we have that  $\Ad(\rho)\simeq \Ad(\varphi)\otimes \Ad(\pi)$. The $F$-algebra with involution $\Ad(\varphi)$ is totally decomposable by \cref{lemma:tensorbf}. As $\Ad(\pi)$ is an $F$-quaternion algebra with quadratic pair, it follows that $\Ad(\rho)$ is totally decomposable.
Assume conversely that $\Ad(\rho)$ is totally decomposable.   If $\kar(F)\neq2$, then any quadratic pair is equivalent to an orthogonal involution and  thus the result corresponds to \cite[Thm.~1]{Becher:qfconj}. If $\kar(F)=2$, then 
 $\rho$ is similar to a Pfister form by \cref{cor:main}.
\end{proof}

Let $(A,\s,f)$ be a totally decomposable $F$-algebra with quadratic pair with $\mathrm{deg}(A)=2^m$ and let $K/F$ be a field extension such that $A_K$ is split.
 Let $\pi$ be the   $m$-fold Pfister form  over $K$  such that $(A,\s,f)_K\simeq\Ad(\pi)$.
In general, it is not possible to find
  an $(m-1)$-fold quadratic Pfister form $\rho$ over $F$ such that $\rho_K$ factors $\pi$.
  This is illustrated by  the following example, which  is a variation of an example in \cite[(3.9)]{parimala:pfisterinv}.
In particular, this example shows that 
\cref{cor:main} cannot be extended to cover fields of characteristic different from $2$, where quadratic and bilinear Pfister forms are equivalent. %

\begin{ex}\label{ex:countex} Let $n\in \mathbb{N}$ with $n\geqslant 4$. 
By  \cite[(38.4)]{Elman:2008} and its proof, there exists a field $F$ such that all
 $3$-fold quadratic Pfister forms over $F$ are hyperbolic and there exist
$F$-quaternion algebras $Q_1,\ldots, Q_n$ such that $A=Q_1\otimes_F\cdots\otimes_F Q_n$ is a division  $F$-algebra. In particular, we have  $\deg(A)=\mathrm{ind}(A)=2^n$. For $i= 1,\ldots, n$,  let $\gamma_i$ be  the canonical involution on $Q_i$ if $\kar(F)=2$ and an orthogonal involution on $Q_i$ if $\kar(F)\neq2$. 
We obtain a totally decomposable $F$-algebra with quadratic pair 
$$ (A,\s,f ) = (Q_1,\gamma_1)\otimes\cdots \otimes (Q_{n-1},\gamma_{n-1})\boxtimes (Q_n,\gamma_n) \,. $$

By \cite[(3.3) and \S2.4]{karpenko:gensplit}
there exists a field extension $K/F$ such that $A_K$ is split and  $(A,\s,f)_K\simeq \Ad(\rho)$ for some quadratic form $\rho$ over $K$ such that  $\mathrm{ind}(A)$ divides $i_W(\rho)$. As for any such $\rho$ we have that  $\dim(\rho) = 2^n=\mathrm{ind}(A)$ and $i_W(\rho)\leqslant \frac{1}{2}\dim(\rho)$, it follows that $i_W(\rho)=0$, that is, $\rho$ is anisotropic. In particular, $\rho$ is not factored by $\pi_K$ for any $(n-1)$-fold Pfister form $\pi$ over $F$, as all such $\pi$ are hyperbolic.
\end{ex}
%%
%%%
%\begin{remark}
%Let $F$ be  a  field of characteristic $2$ with the same properties as the field in  \cref{ex:countex}. Note that such fields cannot be used to construct   counter-examples  to \cref{prop:char2extra} in a similar way to \cref{ex:countex}, as 
%while all $3$-fold quadratic Pfister forms over $F$ are hyperbolic,   it does not follow that  all $3$-fold bilinear Pfister forms over $F$ are metabolic. Indeed, one can easily adapt the construction  of $F$ from \cite[(38.4)]{Elman:2008}  using the methods of \cite[(5.3)]{mammone:indexred}  in such a way that we get  $[F:F^2]=\infty$.   It then follows from \cite[(8.5)]{HoffmannLaghribi:qfpfisterneigbourc2} that there are anisotropic $m$-fold bilinear Pfister forms over $F$ for every $m\in\mathbb{N}$. 
% \end{remark}

\small{}

\end{document}